\newcounter{BiB}
\newtheorem{thm}{Theorem}[subsection]
\newtheorem*{thmNoNumber}{Theorem}
\newtheorem{lemma}[thm]{Lemma}
\newtheorem{prop}[thm]{Proposition}
\newtheorem{cor}[thm]{Corollary}
\DeclareMathAlphabet{\mathpzc}{OT1}{pzc}{m}{it}
\renewcommand{\\}{\newline}
\newcommand{\A}{\mathbb{A}}
\newcommand{\Borelbf}{\text{\rm\bf B}}
\newcommand{\B}{\text{\rm\bf B}}
\newcommand{\CDrin}{\mathcal{Y}^{(n+1)}}
\newcommand{\coker}{\mbox{\rm coker}}
\newcommand{\Drin}{\mathcal{X}^{(n+1)}}
\newcommand{\dlim}{\mathop{\varinjlim}\limits}
\newcommand{\ECal}{\mathcal{E}}
\newcommand{\Ext}{\mbox{\rm Ext}}
\newcommand{\F}{\mathbb{F}}
\newcommand{\FCal}{\mathcal{F}}
\newcommand{\G}{\text{{\rm\bf G}}}
\newcommand{\gr}{\mbox{{\rm gr}}}
\newcommand{\GCal}{\mathcal{G}}
\newcommand{\GL}{\text{\rm GL}}
\newcommand{\SL}{\text{\rm SL}}
\newcommand{\HCal}{\mathcal{H}}
\newcommand{\HH}{\text{\rm H}}
\newcommand{\Hyp}{\mathbb{H}}
\newcommand{\Hom}{\mbox{\rm Hom}}
\newcommand{\id}{\text{\rm id}}
\newcommand{\Ind}{\mbox{\rm Ind}}
\newcommand{\N}{\mathbb{N}}
\newcommand{\Om}{\mathcal{O}}
\newcommand{\Parbf}{\text{\rm \bfseries P}}
\newcommand{\PR}{\mathbb{P}}
\newcommand{\Proj}{\mathop{\rm\bf Proj }}
\newcommand{\Q}{\mathbb{Q}}
\newcommand{\R}{\mathbb{R}}
\newcommand{\sad}{\text{\rm ad }}
\newcommand{\spec}{\text{\rm sp}}
\newcommand{\Spec}{\mathop{\rm\bf Spec}}
\newcommand{\Tbf}{\text{\rm\bf T}}
\newcommand{\Tot}{\mbox{\rm Tot}}
\newcommand{\VCal}{\mathcal{V}}
\newcommand{\Z}{\mathbb{Z}}
\title{Rigid Cohomology of Drinfeld's Upper Half Space over a Finite Field}
\author{Mark Kuschkowitz\\Email: mark.kuschkowitz@googlemail.com}
\begin{document}

\maketitle

\begin{abstract}
In this paper the rigid cohomology of Drinfeld's upper half space over a finite field is computed in two ways. The first method proceeds by computation of the rigid cohomology of the complement of Drinfeld's upper half space in the ambient projective space and then use of the associated long exact sequence for rigid cohomology with proper supports. The second method proceeds by direct computation of rigid cohomology as a direct limit of de Rham cohomologies of a family of strict open neighborhoods of the tube of Drinfeld's upper half space in the ambient rigid-analytic projective space. The resulting cohomology formula has been known since 2007, when Gro{\ss}e-Kl\"onne proved that it is the same as the one obtained from $\ell$-adic cohomology.
\end{abstract}

\allowdisplaybreaks

\section{Introduction}

Fix a prime number $p$ and a finite field extension $k$ of the field $\mathbb{F}_p$ with $p$ elements. The $n$-dimensional Drinfeld upper half space over $k$ is defined as
$$\Drin=\Drin_k=\PR_k^n\setminus\bigcup_{H\subsetneq k^{n+1}}\PR(H),$$
i.e.\ it arises by removing all $k$-rational hyperplanes from the projective $n$-space $\PR_k^n$ over $k.$ This affine space was first defined and studied by Drinfeld \cite{DrinfeldEM,DrinfeldCovs} and it arose in the study of $\ell$-adic representations of finite groups of Lie type (for a prime $\ell\neq p$). In particular, the finite group $\GL_{n+1}(k)$ acts on $\Drin$ (since it permutes the hyperplanes) and Drinfeld showed that the $\ell$-adic cohomology of $\mathcal{X}^{(2)}$ realizes -- up to a sign -- all cuspidal representations with $\ell$-adic coefficients of the group $\SL_2(k).$ Deligne and Lusztig \cite{DeLu} took up this idea and defined varieties -- now called Deligne-Lusztig varieties -- associated with a reductive group $G$ over $k$ whose $\ell$-adic cohomology realizes all irreducible $\ell$-adic representations of the finite group $G(k)$ of $k$-rational points of $G.$ This incorporated Drinfeld's work, as $\Drin$ is in particular isomorphic to a Deligne-Lusztig variety for the algebraic $k$-group $\GL_{n+1,k}$ (associated with the standard Coxeter element of its Weyl group). 

The $\ell$-adic cohomology of $\Drin\times_k \overline k$ was computed by Orlik \cite{OrlikDiss} as a representation of $\GL_{n+1}(k)$ and as a Galois representation. In particular, Orlik's complex, which will be used extensively in this paper, appeared for the first time in \cite{OrlikDiss}. In fact, this particular result was known before since Kottwitz and Rapoport had computed the Euler-Poincar\'e characteristic, cf.\ \cite{RapoportICM,RapoportSantaCruz}, and then the application of purity results yields the cohomology already.
 
Gro{\ss}e-Kl\"onne studied in \cite{GrKl} the rigid cohomology of Deligne-Lusztig varieties and therefore in particular of $\Drin$. Among other results, he found that after identifying the respective coefficient fields, this cohomology coincides with the $\ell$-adic cohomology, seen essentially as a module over $\GL_{n+1}(k).$

It should also be mentioned that the de Rham cohomology in the analogous situation of a $p$-adic local base field, i.e. if one considers Drinfeld's upper half space over a finite extension field of the field $\Q_p$ of $p$-adic numbers, had already been computed by Schneider and Stuhler in \cite{SchSt}.\\

Fix a finite field extension $K$ of the field $\Q_p$ of $p$-adic numbers with valuation ring $\VCal$ and residue field $k.$ Denote by $\G$ the algebraic $\VCal$-group scheme $\GL_{n+1,\VCal}$ and by $\G_k$ its base change to $k.$ The objective of this paper is the computation of the rigid cohomology $\HH_{\rm rig}^{\ast}(\Drin\slash K)$ (resp.\ -- by Poincar\'e duality -- the rigid cohomology $\HH_{\rm rig,c}^{\ast}(\Drin\slash K)$ ``with compact supports'') of $\Drin$ with coefficients in $K$ as a module over the finite group $G=\G(k)=\G_k(k).$ 

For $i\in\{0,\ldots,n\},$ let $\Parbf_{(i+1,1^{n-i})}$ be the lower parabolic subgroup of $\G$ associated with the decomposition $n+1=(i+1)+(n-i)\cdot 1.$ Denote by $v_{\Parbf_{(i+1,1^{n-i})}(k)}^{G}(K)$ the generalized Steinberg representation of $G$ with coefficients in $K$ and by $v_{\Parbf_{(i+1,1^{n-i})}(k)}^{G}(K)'$ its $K$-dual\footnote{See Section \ref{Prelims} for an account of the notation used in this section.}. The result is the following theorem. 
\begin{thmNoNumber}(see Theorem \ref{RigDrinFirstComp} resp.\ Theorem \ref{RigDrinSecComp})
 The rigid cohomology of $\Drin$ with coefficients in $K$ is
 $$\HH_{\rm rig}^{\ast}(\Drin\slash K)=\bigoplus_{i=0}^n{v_{\Parbf_{(i+1,1^{n-i})}(k)}^{G}}(K)'(i-2n)[-n+i].$$
\end{thmNoNumber}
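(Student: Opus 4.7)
The plan is to carry out the two complementary computations alluded to in the abstract. Write $Y=\PR^n_k\setminus\Drin=\bigcup_{H\subsetneq k^{n+1}}\PR(H)$ for the closed complement, an arrangement of $k$-rational hyperplanes whose multi-intersections are projectivizations of $k$-linear subspaces of $k^{n+1}$. The combinatorics of the flag lattice of $k^{n+1}$ is what forces the generalized Steinberg representations to appear, and the aim in either method is to match a Čech/Mayer--Vietoris type complex indexed by this lattice with Orlik's complex, whose cohomology is known to yield the $v_{\Parbf_{(i+1,1^{n-i})}(k)}^{G}(K)'$.

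For the first method, I will use the long exact sequence
$$\cdots\to\HH^i_{\rm rig,c}(\Drin/K)\to\HH^i_{\rm rig}(\PR^n_k/K)\to\HH^i_{\rm rig}(Y/K)\to\HH^{i+1}_{\rm rig,c}(\Drin/K)\to\cdots$$
attached to the open immersion $\Drin\hookrightarrow\PR^n_k$ with closed complement $Y$. The rigid cohomology of $\PR^n_k$ is pure Tate, concentrated in even degrees as $K(-i)$, so it will suffice to compute $\HH^*_{\rm rig}(Y/K)$ as a $G$-representation. I will stratify $Y$ by the components $\PR(H)$ and run a Mayer--Vietoris / Čech spectral sequence; since each multi-intersection is again a $k$-rational linear subvariety of $\PR^n_k$, with pure Tate rigid cohomology, the $E_1$-page assembles into a complex of permutation $G$-modules on $G$-sets of flags. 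This is precisely Orlik's complex, and its (co)homology computes sums of the claimed $v_{\Parbf_{(i+1,1^{n-i})}(k)}^{G}(K)$, with the correct Tate twists dictated by the dimensions of the strata. Reinserting into the long exact sequence gives $\HH^*_{\rm rig,c}(\Drin/K)$, from which Poincaré duality yields $\HH^*_{\rm rig}(\Drin/K)$ with the shift $[-n+i]$ and twist $(i-2n)$ as stated.

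For the second method, I will represent $\HH^*_{\rm rig}(\Drin/K)=\dlim_V\HH^*_{\rm dR}(V/K)$, where $V$ runs over strict open neighborhoods of the tube $]\Drin[$ inside the rigid-analytic space $\PR^{n,{\rm an}}_K$. A cofinal family $V_\epsilon$ is obtained by removing $\epsilon$-tubes around each $k$-rational hyperplane. For each $V_\epsilon$ I will compute $\HH^*_{\rm dR}(V_\epsilon)$ by the spectral sequence associated with the admissible covering by the complements of single tubes, whose intersections reduce to complements of tube arrangements around linear subvarieties. These de Rham computations can be made explicit via logarithmic differentials along the hyperplanes, and the resulting complex is again of Orlik type; passing to the direct limit over $\epsilon$ then reproduces the same answer without invoking Poincaré duality.

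The main obstacle in both approaches is the precise identification of the differentials in the Čech/Mayer--Vietoris complex with those of Orlik's complex, together with the bookkeeping of the $G$-action on the $k$-rational flag combinatorics. Matching the Frobenius eigenvalues on $\HH^*_{\rm rig}$ of each linear stratum with the Tate twists $(i-2n)$ in the final formula requires careful tracking of codimensions, and in the second approach one must additionally verify that the direct limit over $\epsilon$ is exact and does not disturb the Steinberg quotient, so that the formula emerges in each cohomological degree. A secondary technical point is the finiteness of rigid cohomology on the relevant open subvarieties, which is needed both to make Poincaré duality applicable and to commute $\dlim$ with the spectral sequence.
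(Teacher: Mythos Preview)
Your first method is essentially the paper's first computation: one computes $\HH_{\rm rig}^{\ast}(\CDrin/K)$ via an Orlik-type acyclic resolution indexed by proper subsets $I\subsetneq\Delta$ (not by tuples of hyperplanes as in a naive \v Cech complex), obtains a spectral sequence whose $E_1$-page consists of induced modules $\Ind_{P_I}^G\HH_{\rm rig}^s(\PR_k^{i(I)}/K)$, evaluates $E_2$ using the standard resolution of generalized Steinberg representations, observes degeneration by purity/Tate-twist arguments, and then feeds the result into the long exact sequence and Poincar\'e duality. One technical point you underplay is that the paper carries this out in Huber's adic category so that Orlik's complex can be checked to be acyclic by localizing at points and applying Quillen's contractibility criterion; this is where the ``matching of differentials'' you flag as an obstacle is actually handled.

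Your second method, as written, has a genuine gap. The strict neighborhoods $V_\epsilon$ (the paper's $U^m$) are \emph{intersections} of the complements of single hyperplane tubes, not unions, so they do not admit the admissible covering you describe, and there is no \v Cech spectral sequence of that shape. The paper's route is quite different: it passes to the closed complement $Y^m=\PR_K^{n,\rm rig}\setminus U^m$, computes the local cohomology $\HH_{Y^m}^{\ast}(\PR_K^{n,\rm rig},\Omega^i)$ via Orlik's complex on the adic space $Z^m$, and extracts from this a filtration on each $\HH^0(U^m,\Omega^i)$ whose graded pieces are described by reduced local cohomologies $\tilde\HH_{]\PR_k^j[_{\lambda_m}}^{n-j}$ tensored with Steinberg-type modules. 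The de Rham complex is then a filtered complex, and the crucial step (which your outline does not anticipate) is Lemma~\ref{KEYLEMMA}: the direct limit over $m$ of the complexes $\bigl(\tilde\HH_{]\PR_k^j[_{\lambda_m}}^{n-j}(\PR_K^{n,\rm rig},\Omega^i)\bigr)_i$ is acyclic, which is proved by comparing with the rigid cohomology of $\PR_k^n\setminus\PR_k^j$. Logarithmic differentials do not enter; the mechanism that forces the Steinberg pieces to survive is this acyclicity together with the known cohomology $\HH^{\ast}(\PR_K^{n,\rm rig},\Omega^i)$.
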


The notation $[-i]$ means that the respective module lives in degree $i$ and $(-)$ determines its Tate twist. 

As stated above, this theorem is due to Gro{\ss}e-Kl\"onne \cite{GrKl}. Here, this cohomology is computed once indirectly by considering the complement of $\Drin$ in $\PR_k^n$ and once directly from the associated de Rham complex. Both times, modified versions of Orlik's complex are used. For technical reasons, many computations are carried out in the category of Huber's adic spaces.\\

The next two subsections contain an outline of both computations.

\subsubsection{Rigid Cohomology of \texorpdfstring{$\Drin$}{X} Computed Indirectly}

Recall that for a projective $k$-variety $Y\subset\PR_k^n,$ its rigid cohomology is simply the de Rham cohomology $\HH_{\rm dR}^\ast(]Y[,K)$ with values in $K$ of its rigid analytic tube (of radius $1$) $]Y[\subset \PR_K^{n,\rm rig}.$ Therefore, in this first part, the de Rham cohomology of the rigid analytic tube $]\CDrin[$ of $\CDrin=\PR_k^n\setminus \Drin$ is computed. The rigid cohomology of $\Drin$ is then known by applying the long exact cohomology sequence for the pair of inclusions $\Drin\stackrel{\text{open}}{\subset}\PR_k^n\stackrel{\text{closed}}{\supset}\CDrin.$ The key ingredient for the computation of the rigid cohomology $\HH_{\rm rig}^\ast(\CDrin\slash K)=\HH_{\rm dR}^\ast(]\CDrin[,K)$ is a modified version of Orlik's complex for $]\CDrin[$ with values in the de Rham complex $\Omega_{]\CDrin[\slash K}^{\bullet}$ on $]\CDrin[.$ In this situation, Orlik's complex is indexed by the tubes associated with certain closed subvarieties of $\CDrin$ and this complex is shown to be acyclic, see Corollary $\ref{FundaKompRevArrCorr}.$ Thus it induces a spectral sequence converging to $\HH_{\rm rig}^{\ast}(\CDrin\slash K)$ with computable entries on the $E_1$-page. This spectral sequence degenerates on its $E_2$-page (Lemma \ref{ETO}) and the evaluation of the associated grading on $\HH_{\rm rig}^{\ast}(\CDrin\slash K)$ then yields this cohomology (Proposition \ref{RigCohCDrinComp}).

\subsubsection{Rigid Cohomology of \texorpdfstring{$\Drin$}{X} Computed from the Associated De Rham Complex}

For a quasi-projective $k$-variety $X\subset \PR_k^n,$ its rigid cohomology $\HH_{\rm rig}^{\ast}(X\slash K)$ is defined as the hypercohomology on $\PR_K^{n,\rm rig}$ with values in the direct limit of the de Rham complexes of a system of strict open neighborhoods of $]X[$ (the rigid-analytic tube of $X$ in $\PR_K^{n,\rm rig}$). In the first step towards computing $\HH_{\rm rig}^{\ast}(\Drin\slash K),$ a cofinal system $(U^m)_{m\in\N}$ of affinoid strict open neighborhoods of $]\Drin[$ in $\PR_K^{n,\rm rig}$ is constructed, where
$$U^m = \PR_K^{n,\rm rig}\setminus\bigcup_{H\subsetneq k^{n+1}}]\PR(H)[_{\lambda_m}$$
for $\lambda_m=\#(k\slash \F_p)^{-1\slash m+1}$ and $]Z[_{\lambda_m}$ is the tube of $Z$ with radius $\lambda_m$ in $\PR_K^{n,\rm rig}$ (see Lemma \ref{LostLemma}). It is then shown that in order to use the associated de Rham complex for the determination of the above hypercohomology, it is enough to compute the spaces of sections $\HH^0(U^m,\Omega_{\PR_K^{n,\rm rig}\slash K}^i)$ of the sheaves of differential $i$-forms $\Omega_{\PR_K^{n,\rm rig}\slash K}^i$ on each $U^m$ and then take their direct limit (Lemma \ref{DERHAMWORKS}). The spaces $U^m$ are constructed in such a way that one can again make use of an adapted version of Orlik's complex to determine all spaces $\HH^0(U^m,\Omega_{\PR_K^{n,\rm rig}\slash K}^i).$ To be precise, Orlik's complex in this situation (which is again acyclic, see Proposition \ref{ACDC}) yields a spectral sequence which computes the local cohomology $\HH_{Y^m}^1(\PR_K^{n,\rm rig},\Omega_{\PR_K^{n,\rm rig}\slash K}^i)$ of $\PR_K^{n,\rm rig}$ with supports in the complement $Y^m=\PR_K^{n,\rm rig}\setminus U^m$ of $U^m$ in $\PR_K^{n,\rm rig}$ and values in $\Omega_{\PR_K^{n,\rm rig}\slash K}^i.$ The key to the evaluation of this spectral sequence is again the fact that it has computable entries in its $E_1$-page, see Lemma \ref{EONELEMMA}. The remainder of the evaluation of this spectral sequence then proceeds analogous to the respective section of Orlik's paper \cite{OrlikEVB}. In particular, it yields a finite filtration of $\G(\VCal)$-submodules of $\HH_{Y^m}^1(\PR_K^{n,\rm rig},\Omega_{\PR_K^{n,\rm rig}\slash K}^i),$ see Theorem \ref{DescMod}.

The rest of the computation then closely follows Orlik's paper \cite{OrlikDeRham}: The functorialities of the filtrations obtained in the last theorem then imply that the de Rham complex 
$$0\rightarrow \HH^0(U^m,\Om_{\PR_K^{n,\rm rig}})\rightarrow \HH^0(U^m,\Omega_{\PR_K^{n,\rm rig}}^1)\rightarrow \ldots\rightarrow \HH^0(U^m,\Omega_{\PR_K^{n,\rm rig}}^n)\rightarrow 0$$
is actually a filtered complex. The associated spectral sequence degenerates on its $E_1$-page with computable entries. Taking direct limits, one then obtains the desired rigid cohomology. The key result is Lemma \ref{KEYLEMMA} which asserts that for each $j\in\{0,\ldots,n-1\},$ the complex  
\begin{eqnarray*}
	0&\rightarrow& \dlim_{m\in\N}\tilde\HH_{]\PR_k^{j}[_{ 	\lambda_m}}^{n-j}(\PR_K^{n,\rm rig},\Omega_{\PR_K^{n,\rm rig}\slash K}^0)\rightarrow \dlim_{m\in\N}\tilde\HH_{]\PR_k^{j}[_{\lambda_m}}^{n-j}(\PR_K^{n,\rm rig},\Omega_{\PR_K^{n,\rm rig}\slash K}^1)\rightarrow\ldots\\
	&\rightarrow&\dlim_{m\in\N}\tilde\HH_{]\PR_k^{j}[_{\lambda_m}}^{n-j}(\PR_K^{n,\rm rig},\Omega_{\PR_K^{n,\rm rig}\slash K}^n)\rightarrow 0
\end{eqnarray*}
consisting of direct limits of reduced local cohomologies with support in $]\PR_k^{j}[_{\lambda_m}$ is acyclic. 

\subsection{Structure of this Paper}

For the reader's convenience, a short overview of the organization of the content of this paper is given in the sequel:

In Section \ref{Prelims} the general notation used in the course of this paper is fixed. Furthermore, there are some short recollections on (generalized) Steinberg representations, on Drinfeld's upper half space and on rigid geometry and Huber's adic spaces. 

In Section \ref{SecThreeOne} the construction of rigid cohomology (of a quasi-projective $k$-variety) is reviewed and some of its properties are recalled. 

In Sections \ref{SecThreeThree} and \ref{SecThreeFour} the computations of $\HH_{\rm rig}^*(\Drin\slash K)$ are carried out. Each section has the following structure: Orlik's complex is adapted for certain classes of tubes of rigid varieties (Subsections \ref{SecThreeTwo} and \ref{OneMoreTime}), a spectral sequence is set up (Subsections \ref{ThreeThreeOne} and \ref{ThreeFourOne}), computed (Subsections \ref{ThreeThreeTwo} and \ref{ThreeFourTwo}) and the result is used to finally compute the rigid cohomology modules of $\Drin$ (Subsections \ref{ThreeThreeThree} and \ref{ThreeFourThree}).

\subsection{Acknowledgements}

The content of this paper is a part of my doctoral dissertation at Bergische Universit\"at Wuppertal and I wish to thank all my former colleagues in the working group ``Arbeitsgemeinschaft Algebra und Zahlentheorie'' for creating a wonderful working atmosphere. In particular, I would like to mention and sincerely thank my advisor Sascha Orlik who introduced me to the subjects treated here and who offered invaluable help and encouragement. Furthermore, Roland Huber and Markus Reineke always patiently answered my questions and my friends and colleagues Martin Bender and Hans Franzen were always wonderful discussion partners and sounding boards; I wish to thank them all for their help and support.

\section{Notation and Preliminaries}\label{Prelims}

As in the introduction, fix a prime number $p,$ denote by $\F_p$ the field with $p$ elements and by $k$ a fixed finite field extension of $\F_p.$ Fix a finite field extension $K\slash \Q_p$ with residue field $k,$ denote by $\VCal$ its valuation ring and let $\pi\in\VCal$ be a uniformizing element. Choose a norm $\lvert\text{ }\rvert:K\rightarrow\R_{\geq 0},$ normalized such that $\lvert\pi\rvert =\#(k\slash \F_p)^{-1}.$ Fix a completion $\widehat{\overline K}$ of an algebraic closure $\overline K$ of $K$ and also denote by $\lvert\text{ }\rvert:\widehat{\overline K}\rightarrow\R_{\geq 0}$ the uniquely determined extension of $\lvert\text{ }\rvert$ to $\widehat{\overline K}.$

\subsection{(Algebraic) Groups and Representations}\label{AlgGrpReps}

Fix $n\in\N$ and let $\G$ be the algebraic group scheme $\GL_{n+1,\VCal}$ over $\VCal$ with lower Borel subgroup $\B$ (i.e.\ for a $\VCal$-algebra $R,$ the group $\B(R)$ is the subgroup of lower triangular matrices in $\G(R)$) and diagonal torus $\Tbf\subset\B.$ Write
$$X(\Tbf)=\Hom(\Tbf,\G_m)$$ 
for the $\Z$-module of algebraic characters of $\Tbf.$ For $i\in\{0,\ldots,n\},$ denote by $\epsilon_i\in X(\Tbf)$ the character which sends an element $(t_0,\ldots,t_n)\in\Tbf(R),$ $R$ a $\VCal$-algebra, to $t_i.$ For $0\leq i\neq j\leq n,$ let 
$$\alpha_{i,j}=\epsilon_i-\epsilon_j.$$ 
Then 
$$\Phi=\{\alpha_{i,j}\mid 0\leq i\neq j\leq n\}$$ 
is the set of roots of $\G$ (with respect to $\Tbf$) and
$$\Delta=\{\alpha_{0}=\alpha_{1,0},\ldots,\alpha_{n-1}=\alpha_{n,n-1}\}$$ 
is the set of simple roots (with respect to $\Borelbf\supset\Tbf$). For a proper subset $I\subsetneq \Delta,$ denote by $\Parbf_I$ the associated standard-parabolic subgroup (with respect to $\Borelbf$). Whenever it is more convenient, the alternative description of standard-parabolic subgroups in terms of decompositions of $n+1$ will be used: Let $(i_0,\ldots,i_r)$ be a decomposition of $n+1,$ i.e.\ $n+1=i_0+\ldots+i_r$ with $i_0,\ldots,i_r\in\N.$ Then $(i_0,\ldots,i_r)$ defines a standard-parabolic subgroup $\Parbf_{(i_0,\ldots,i_r)}=\Parbf_I$ with 
\begin{eqnarray*}
I&=&\{\alpha_{0},\ldots,\alpha_{i_0-2}\}\cup \{\alpha_{i_0},\ldots,\alpha_{i_0+i_1-2}\}\cup\ldots\cup \{\alpha_{i_0+\ldots+i_{r-1}},\ldots,\alpha_{i_0+\ldots+i_r-2}\}\nonumber                                                                                                                                                                                                                                                                                                                                                                                                                                                                                                                                                                                                                                                                                                                                                                                                                                                                                                                                                                                                                                                                                                                                                                                                                                                                                                                                                                                                                           \end{eqnarray*}
(where all undefined sets are to be understood as being empty).

For a $\VCal$-algebra $R,$ write 
$$\G_R=\G\times_{\Spec\VCal}\Spec R$$ 
and similarly for the subgroups mentioned above. In particular, in the case that $R=k,$ set
$$G=\G_k(k),B=\Borelbf_k(k),P_I=\Parbf_{I,k}(k),T=\Tbf_k(k)$$
and similarly in the case where $I$ is replaced by its associated decomposition of $n+1$ in the above sense.

\subsection{(Generalized) Steinberg Representations}\label{GenSteinRep}

Suppose that $H=P_I$ is the group of $k$-points of a standard-parabolic subgroup $\Parbf_I\subset\G.$ Equip $K$ with the trivial action of $G.$ The generalized Steinberg representation of $G$ with respect to $K$ and $H$ is defined as
$$v_H^G(K)=\Ind_H^G(K)\slash\sum_{H\subsetneq H'\subset G}\Ind_{H'}^G(K).$$
If $H=B,$ then $v_H^G(K)$ is called Steinberg representation. This representation is irreducible and self-dual, cf. \cite{HumSt}. The generalized Steinberg representation is irreducible if and only if $I$ is either empty or of the shape 
$I=\{\alpha_0,\alpha_1,\ldots,\alpha_i\},$ $i\in\{0,\ldots,n-1\},$ cf.\ \cite[Prop. 2.5]{OrlikDeLu}.

\subsection{Drinfeld's Upper Half Space over a Finite Field}\label{DrinUpHaSp}

Let 
$$S=k[T_0,\ldots,T_n]$$ 
be the polynomial ring in $n+1$ variables over $k$ with its usual grading $S=\bigoplus_{i\in\N_0}S_i$ and write 
$$\PR_k^n=\Proj S.$$ 
For $j\in \{0,\ldots,n-1\},$ denote by $\PR_k^j$ the closed subvariety $V_+(T_{j+1},\ldots,T_n)\subset\PR_k^n.$ The $n$-dimensional Drinfeld upper half space over $k$ is the affine open subvariety
$$\Drin=\PR_k^n\setminus\bigcup_{f\in S_1}V_+(f)$$
of $\PR_k^n$ which arises by removing all $k$-rational hyperplanes from $\PR_k^n.$ Denote by $\CDrin$ its closed complement in $\PR_k^n,$ i.e.\
$$\CDrin=\bigcup_{f\in S_1}V_+(f).$$

Consider the action of $\G_k$ on $\PR_k^n$ which on closed points is given by $$(g,[x_0:\ldots:x_n])\mapsto [x_0:\ldots:x_n]g^{-1}.$$ 
This action restricts to an action of the finite group $G$ on $\Drin$ resp.\ on $\CDrin$ since it permutes the hyperplanes $V_+(f)$ (with $f\in S_1$).

\subsection{Rigid Analytic Varieties and Adic Spaces}\label{RigVarAdSp}

Throughout this paper, there will be made use of the concepts of rigid-analytic spaces over $K$ (cf.\ e.g.\ \cite{BGR,FrPu}) and of adic spaces over $K$ in the sense of Huber (cf.\ e.g.\ \cite{HuberMZ,HuberBook}). The following notation and facts will be used: For a $K$-variety $X,$ denote by $X^{\rm rig}$ its associated rigid-analytic $K$-variety (as in \cite[9.3.4]{BGR}) and by $X^{\sad}$ its associated adic space (as in \cite[Par.\ 4]{HuberMZ}). If $X$ is a rigid-analytic $K$-variety, its associated adic space will also be denoted by $X^{\sad},$ cf.\ loc.\ cit. To be a little more precise, recall that there are the following functors: 
\begin{itemize}
 \item $(-)^{\rm rig},$ from the category of $K$-varieties to the category of rigid analytic $K$-varieties,
 \item $(-)^{\rm ad},$ from the category of rigid analytic $K$-varieties to the category of adic spaces over $K,$
 \item $(-)^{\rm ad},$ from the category of formal schemes over $K$ to the category of adic spaces over $K,$
\end{itemize}
such that for a $K$-variety $X$, there is an isomorphism $(X^{\rm rig})^{\sad}\cong X^{\sad}$ (where on the right-hand side, $X$ is considered as a formal scheme, i.e. as the formal completion along itself). Furthermore, each of these functors induces an equivalence of the respective topoi of sheaves, which are, respectively, the topos of sheaves on the Zariski topology of a $K$-variety, the topos of sheaves on the Grothendieck site associated with a rigid-analytic $K$-variety, and the topos of sheaves of an adic space over $K$. So in particular, for a rigid-analytic $K$-variety $X$ and a sheaf $\FCal$ on its Grothendieck topology, there is an isomorphism 
$$\HH^i(X,\FCal)\cong\HH^i(X^{\sad},\FCal^{\sad})$$ 
of cohomology groups, where $\FCal^{\sad}$ denotes the sheaf on $X^{\sad}$ induced by $\FCal$ via the above equivalence. For all facts mentioned in this paragraph, cf.\ \cite[Par. 4]{HuberMZ} resp.\ \cite[1.1.11-12]{HuberBook}. 

Concerning this paper, the main advantage of working in the category of adic spaces at times instead of the category of rigid analytic spaces is that the former are in fact topological spaces. In particular, this means that sheaves on adic spaces are well-behaved with respect to localization in points.

\section{Construction of Rigid Cohomology and some Properties}\label{SecThreeOne}

For this section, references are for example \cite{BerGeoRig,LeStumRigCoh}. 

\subsection{Berthelot's Definition of Rigid Cohomology (with and without Supports)}\label{BerDefRiCoh}

Let $\widehat{\PR_{\VCal}^n}$ be the formal completion of $\PR_{\VCal}^n$ along its special fiber $\PR_k^n.$ There is then a closed embedding $\PR_k^n\hookrightarrow \widehat{\PR_{\VCal}^n},$ which is a homeomorphism of the underlying topological spaces. Furthermore, there is a specialization map 
$$\spec^{\rm rig}:\PR_K^{n,\rm rig}\rightarrow \PR_k^n,$$ 
which on points is given by
$$[x_0:\ldots:x_n]\mapsto[\bar x_0:\ldots:\bar x_n]\in\PR_k^n$$ for $[x_0:\ldots:x_n]$ unimodular, i.e.\ $\lvert x_i\rvert \leq 1$ for all $i\in\{0,\ldots,n\}$ and $\lvert x_j\rvert=1$ for at least one $j\in\{0,\ldots,n\}.$ Here and in the sequel, $\bar x$ denotes the element of $\overline{k}$ defined by an element $x$ in the ring of integers of $\widehat{\overline K}$ by reduction modulo its maximal ideal. 

Let $X\subset\PR_k^n$ be a locally closed (quasi-projective) smooth $k$-subvariety. By definition, there is a closed subvariety $Y\subset \PR_k^n$ together with an open embedding $X\subset Y.$ Set 
$$]X[=(\spec^{\rm rig})^{-1}(X).$$
This is a rigid-analytic subvariety of $\PR_K^{n,\rm rig},$ called the tube of $X$ (of radius $1$). For the purposes of this paper, it will suffice to assume that either $Y=\PR_k^n$ or $X=Y,$ i.e.\ $X$ is either open or closed in $\PR_k^n.$  
 
Suppose first that $X$ is open in $\PR_k^n=Y$ and denote by $Z=\PR_k^n\setminus X$ its closed complement. A strict open neighborhood of $]X[$ in $\PR_K^{n,\rm rig}$ is an admissible open subset $V\subset \PR_K^{n,\rm rig}$ such that $]X[\subset V$ and such that $(V,]Z[)$ is an admissible covering of $\PR_K^{n,\rm rig}.$ The category of coefficients of rigid cohomology is the category of overconvergent $F$-isocrystals on $X\slash K$ whose objects can be briefly described as follows: An overconvergent $F$-isocrystal on $X\slash K$ is a coherent $\Om_{V}$-module $\FCal$ on some strict open neighborhood $V$ of $]X[$ in $\PR_K^{n,\rm rig}$ together with an integrable connection $\FCal\rightarrow\FCal\otimes_{\Om_V}\Omega_V^1$ such that certain overconvergence conditions are fulfilled and such that locally, there is an isomorphism $F^*\FCal\xrightarrow{\sim}\FCal,$ where $F$ denotes a local lift of the absolute ($q$-power) Frobenius
on $\PR_k^n$ to $\PR_K^{n,\rm rig},$ cf.\ e.g.\ \cite[§ 4]{BerGeoRig} or \cite[§ 7-8]{LeStumRigCoh}. Then the rigid cohomology of $X$ with values in $\FCal$ is defined as the hypercohomology
$$\HH_{\rm rig}^*(X\slash K,\FCal)=\Hyp^*\left(\PR_K^{n,\rm rig},\dlim_{V'}\left({j_{V'}}_*\right)\left(j_{V',V}^*\right)(\FCal\otimes_{\Om_{V}}\Omega_{V}^{\bullet})\right).$$
Here, the limit is taken over all strict open neighborhoods $V'$ of $]X[$ in $\PR_K^{n,\rm rig}$ which are contained in $V$ and 
\begin{eqnarray*}
j_{V',V} &:& V'\hookrightarrow V, \\
j_{V'} &:& V'\hookrightarrow \PR_K^{n,\rm rig}
\end{eqnarray*}
are the respective embeddings of admissible open subsets given by inclusion. In the case of the trivial isocrystal, i.e.\ when $\FCal$ is just the structure sheaf, write $\HH_{\rm rig}^{\ast}(X\slash K)$ for the resulting rigid cohomology.

If $X$ is closed in $\PR_k^n,$ then an overconvergent $F$-isocrystal on $X\slash K$ is an $\Om_{]X[}$-module with the above additional properties and the definition of rigid cohomology of $X$ with values in $\FCal$ simplifies to
$$\HH_{\rm rig}^*(X\slash K,\FCal)=\Hyp^*(]X[,\FCal\otimes_{\Om_{]X[}}\Omega_{]X[}^{\bullet}).$$
In particular, in the case of the trivial isocrystal, its rigid cohomology on $X$ is just the usual de Rham cohomology of $]X[.$

There is a notion of rigid cohomology ``with compact supports'', written $\HH_{\rm rig,c}^{\ast}(X\slash K,\FCal).$ Here, $\FCal$ (as above) is replaced by $$\underline\Gamma(\FCal)=\ker(\FCal\rightarrow \iota_{*}\iota^*\FCal),$$ 
where $\iota:V\cap]Z[\hookrightarrow V$ is the inclusion, and then, by definition,
$$\HH_{\rm rig,c}^{\ast}(X\slash K,\FCal)=\Hyp^{\ast}(V, {\rm \bf R}\underline\Gamma(\FCal\otimes_{\Om_{V}} \Omega_{V}^{\bullet})).$$
Again, in the case that $\FCal$ is trivial, the notation reduces to $\HH_{\rm rig, c}^{\ast}(X\slash K).$

\subsection{Some Properties of Rigid Cohomology}\label{RigProps}

First of all, the above definitions of rigid cohomology and of rigid cohomology with compact supports are essentially (i.e.\ up to canonical isomorphism) independent of all choices made and make $\HH_{\rm rig}^{\ast}$ into a Weil cohomology theory. In particular, the following hold:
\begin{itemize}
 \item For each $i\in\N_0$, the $K$-spaces $\HH_{\rm rig}^{i}(X\slash K,\FCal)$ and $\HH_{\rm rig,c}^{i}(X\slash K,\FCal)$ are finite-dimensional.
 \item If $X$ is of dimension $n,$ then $\HH_{\rm rig,c}^{i}(X\slash K,\FCal)=0$ for $i>2n.$
 \item For each $i\in\Z,$ there is a canonical homomorphism $\HH_{\rm rig,c}^{i}(X\slash K,\FCal)\rightarrow \HH_{\rm rig}^{i}(X\slash K,\FCal)$ which is an isomorphism if $X$ is proper.
 \item Let $Z\subset X$ be a closed subvariety and let $U=X\setminus Z$ be its open complement. Then there is a long exact sequence 
 \begin{equation*}
  \ldots \rightarrow \HH_{\rm rig,c}^{i-1}(Z\slash K,\FCal)\rightarrow \HH_{\rm rig,c}^{i}(U\slash K,\FCal)\rightarrow \HH_{\rm rig,c}^{i}(X\slash K,\FCal)\rightarrow \HH_{\rm rig,c}^{i}(Z\slash K,\FCal)\rightarrow\ldots .
 \end{equation*}
 \item If $X$ is of dimension $n,$ then for each $i\in \{0,\ldots,2n\},$ there is a perfect pairing
 $$\HH_{\rm rig}^{i}(X\slash K,\FCal)\times\HH_{\rm rig,c}^{2n-i}(X\slash K,\FCal^{\vee})\rightarrow K(-n)[-2n]$$
 of $K$-spaces, where $\FCal^{\vee}$ is the isocrystal dual to $\FCal$ (defined in the usual way). Here and in the sequel, $(-)(l)$ means that the Frobenius automorphism $F$ acts by multiplication with $q^{-l}$ (``$l$-th Tate twist'') and $(-)[j]$ means ``shift in degree $-j$''. For an arbitrary $K$-vector space $V$ and an integer $i\in\Z,$ set 
 $$V(i)=V\otimes_KK(i),$$
 i.e.\ $F$ acts on this space through its action on $K(i).$
 \item If $X$ is affine of dimension $n,$ then $\HH_{\rm rig}^{i}(X\slash K,\FCal)=0$ for $i\notin\{0,\ldots,n\}.$ 
 \item Let $n\in\N_0.$ Then there are the following identifications:
 \begin{eqnarray}\label{Brucke} 
  \HH_{\rm rig}^{\ast}(\PR_k^n\slash K) &=& \bigoplus_{i=0}^n K(-i)[-2i],\nonumber\\
  \HH_{\rm rig,c}^{\ast}(\A_k^n\slash K)&=& K(-n)[-2n],\\
  \HH_{\rm rig}^{\ast}(\A_k^n\slash K)&=& K(0)[0].\nonumber
 \end{eqnarray}
\end{itemize}

\section{Rigid Cohomology of \texorpdfstring{$\Drin$}{X} Computed Indirectly}\label{SecThreeThree}

In this section, the rigid cohomology $\HH_{\rm rig}^{\ast}(\CDrin\slash K)=\HH_{\rm rig,c}^{\ast}(\CDrin\slash K)$ of the closed complement $\CDrin$ of $\Drin$ in $\PR_k^n$ is computed directly as a hyper\-co\-ho\-mo\-lo\-gy. Then, the long exact sequence for rigid cohomology with compact supports for the pair of inclusions 
$$\Drin\stackrel{\text{open}}{\hookrightarrow}\PR_k^n\stackrel{\text{closed}}{\hookleftarrow}\CDrin$$ 
is used to determine $\HH_{\rm rig,c}^{\ast}(\Drin\slash K).$ The rigid cohomology of $\Drin$ is then known via Poincar\'e duality.

\subsection{Adaption of Orlik's Complex}\label{SecThreeTwo}

In order to compute $\HH_{\rm rig}^{\ast}(\CDrin\slash K)$, there will be made use of the following modified version of Orlik's complex:

For a proper subset $I\subsetneq\Delta$ with $i=i(I)=\min\{j\in\N_0\mid\alpha_{j}\in\Delta\setminus I\},$ let 
$$Y_I=\PR_k^{i(I)}.$$ 
Each inclusion $I\subset J\subsetneq\Delta$ then induces closed embeddings
$$\iota_{I,J}:Y_I\hookrightarrow Y_J$$
and for any two $I,J\subsetneq \Delta$ the identity $Y_{I\cap J}=Y_I\cap Y_J$ holds. The variety $Y_I$ is stabilized by the parabolic subgroup $\Parbf_{I,k}$ under the action of $\G_k$ on $\PR_k^n.$ By construction, there is then an identification
$$\CDrin=\bigcup_{I\subsetneq\Delta}\bigcup_{g\in G\slash P_I}g. Y_I.$$
For $I\subsetneq\Delta$ and $g\in G\slash P_I$ write 
$$\Phi_{g,I}:g.Y_I\hookrightarrow\CDrin$$
for the closed embedding given by inclusion. If furthermore $J\subsetneq\Delta$ and $h\in G\slash P_J$ with $I\subset J$ and such that $g P_I$ is mapped to $h P_J$ under the canonical map $ G\slash P_I\rightarrow  G\slash P_J,$ then write
$$\iota_{I,J}^{g,h}:g.Y_I\hookrightarrow h.Y_J$$
for the closed embedding given by inclusion.\\

Now associate to each object its tube and then the respective adic space, i.e.\ there are adic spaces $]\CDrin[^{\sad}$ and  $]g.Y_I[^{\sad}$ for $I\subsetneq \Delta,$ $g\in G,$ all of which are open in $\PR_K^{n,\sad}.$ For $I\subset J\subsetneq\Delta$ and $g,h\in G$ with $g P_I\mapsto h P_J$ under $G\slash P_I\rightarrow G\slash P_J,$ the maps $\iota_{I,J}^{g,h}$ and $\Phi_{g,I}$ give rise to open embeddings of adic spaces associated with the respective tubes 
\begin{eqnarray*}
  ]g.Y_I[^{\sad} &\hookrightarrow& ]h.Y_J[^{\sad},\\
  ]g.Y_I[^{\sad} &\hookrightarrow& ]\CDrin[^{\sad}.
\end{eqnarray*}

For cohomological purposes, it will be more convenient to work with closed embeddings. Therefore, denote by 
$$\spec^{\sad}:\PR_K^{n,\sad}\rightarrow\PR_k^n$$ 
the adic specialization map. It is continuous, cf.\ \cite[1.9]{HuberBook} resp.\ \cite[Section 4]{HuberMZ}, and for a locally closed subvariety $X\subset\PR_k^n,$ the adification $]X[_P^{\sad}$ of its rigid-analytic tube identifies with the interior $(\spec^{\sad})^{-1}(X)^o$ of its adic tube, cf.\ \cite[Lemma 5.6.9]{HuberBook}. For each $I\subset J$ and $gP_I\mapsto h P_J$ as above, there is then a commutative triangle 

\begin{equation}\label{SecDiag}
\begin{diagram}
  &  & (\spec^{\sad})^{-1}(\CDrin) & \\
  & \ruInto^{\Phi_{g,I}^{\sad}}  & \uInto_{\Phi_{h,J}^{\sad}}\\
  (\spec^{\sad})^{-1}(g.Y_I) & \rInto^{\iota_{I,J}^{g,h,\sad}} & (\spec^{\sad})^{-1}(h.Y_J)\\
  &  
\end{diagram} 
\end{equation}

\noindent of closed embeddings of closed subspaces of $(\spec^{\sad})^{-1}(\CDrin).$ Here, the maps appearing in the triangle are again induced by the respective maps of $k$-varieties above. Let $\ECal$ be a sheaf of abelian groups on $(\spec^{\sad})^{-1}(\CDrin).$ Each triangle (\ref{SecDiag}) gives rises to a morphism 
$$p_{I,J}^{g,h}:(\Phi_{h,J}^{\sad})_*(\Phi_{h,J}^{\sad})^*\ECal\rightarrow (\Phi_{g,I}^{\sad})_*(\Phi_{g,I}^{\sad})^*\ECal$$ 
of sheaves on $(\spec^{\sad})^{-1}(\CDrin)$ via the adjunction property of the involved functors. Define
\begin{eqnarray*}
\ECal_{g,I} &=& \left(\Phi_{g,I}\right)_*\left(\Phi_{g,I}\right)^*\ECal,\nonumber\\ 
 \ECal_{I}  &=& \bigoplus_{g\in G\slash P_I}\ECal_{g,I},\nonumber
\end{eqnarray*}
and set 
\begin{eqnarray*}
p_{I,J} &=& \bigoplus_{(g,h)\in G\slash P_I\times G\slash P_J}p_{I,J}^{g,h},\nonumber\\
d_{I,J} &=& \begin{cases} (-1)^{i}p_{I,J} &\text{if }J=I\dot\cup\{\alpha_i\}\\
                                          0 &\text{else},
\end{cases}\nonumber
\end{eqnarray*}
where $p_{I,J}^{g,h}=0$ if $gP_I$ does not map to $h P_J.$ The maps $d_{I,J}$ now induce differentials so that the following complex of sheaves on $(\spec^{\sad})^{-1}(\CDrin)$ is defined: 
\begin{equation}\label{OrlikFundKom}
 0\rightarrow\ECal\rightarrow\bigoplus_{I\subsetneq\Delta\atop \# I=n-1}\ECal_I\rightarrow \ldots \rightarrow\bigoplus_{I\subsetneq\Delta\atop \# I=1}\ECal_I\rightarrow \ECal_{\emptyset} \rightarrow 0. 
\end{equation}

\begin{prop}{\rm (cf.\ \cite[Satz 5.3]{OrlikDiss})}\label{FundaKompAd}
The complex (\ref{OrlikFundKom}) is acyclic.
\end{prop}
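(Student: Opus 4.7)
The plan is to establish acyclicity of the complex (\ref{OrlikFundKom}) stalkwise. This is precisely the reason the construction has been transported to the category of adic spaces: the underlying space of $(\spec^{\sad})^{-1}(\CDrin)$ is a genuine topological space, so a complex of sheaves of abelian groups on it is acyclic if and only if each of its stalks is acyclic.

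For an arbitrary point $x \in (\spec^{\sad})^{-1}(\CDrin)$, set $\bar x = \spec^{\sad}(x) \in \CDrin$. Since $g.Y_I$ is closed in $\CDrin$ and the adic specialization map is continuous, $\Phi_{g,I}^{\sad}$ is a closed embedding of $(\spec^{\sad})^{-1}(g.Y_I)$ into $(\spec^{\sad})^{-1}(\CDrin)$. Consequently,
$$\bigl((\Phi_{g,I}^{\sad})_{\ast}(\Phi_{g,I}^{\sad})^{\ast}\ECal\bigr)_x = \begin{cases} \ECal_x & \text{if } \bar x \in g.Y_I, \\ 0 & \text{otherwise,} \end{cases}$$
and each morphism $p_{I,J}^{g,h}$ specializes at $x$ to the identity of $\ECal_x$ when both source and target are nonzero and to zero otherwise. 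Hence the stalk of (\ref{OrlikFundKom}) at $x$ splits as $\ECal_x \otimes_{\Z} C^{\bullet}_{\bar x}$, where $C^{\bullet}_{\bar x}$ is the purely combinatorial $\Z$-linear complex whose term in the position of $\bigoplus_{\#I = m} \ECal_I$ is the free abelian group on the set of pairs $(I, gP_I)$ with $\#I = m$ and $\bar x \in g.Y_I$, and whose differentials are the signed combinatorial incidences underlying the $d_{I,J}$'s.

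It thus suffices to show that $C^{\bullet}_{\bar x}$ is acyclic for every $\bar x \in \CDrin$. This is a purely combinatorial assertion, and is essentially the content of \cite[Satz 5.3]{OrlikDiss}. The idea is to realize $C^{\bullet}_{\bar x}$ as the augmented simplicial cochain complex of the order complex of the poset of pairs $(I, gP_I)$ with $\bar x \in g.Y_I$, i.e.\ as a star-type subcomplex of the combinatorial Tits building of $\G_k$. Contractibility of this order complex follows from the existence of a smallest $k$-rational proper linear subspace of $\PR_k^n$ through $\bar x$ --- namely the intersection of all such subspaces, which is nonempty precisely because $\bar x \in \CDrin$ --- and which furnishes a cone point for the poset.

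The main obstacle is the combinatorial accounting in the last step: distinct subsets $I \subsetneq \Delta$ may yield the same subvariety $Y_I = \PR_k^{i(I)}$ (namely all $I$ with the same $i(I) = \min\{j : \alpha_j \notin I\}$), and the indexing by $G/P_I$ involves further multiplicities coming from the larger stabilizers $P_{(i(I)+1, 1^{n-i(I)})}$. Tracking these carefully in order to identify $C^{\bullet}_{\bar x}$ with a standard contractible nerve is the technical heart of the argument, and is precisely what Orlik's dissertation carries out; the transposition to the adic setting here is then purely formal.
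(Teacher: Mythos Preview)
Your proposal is correct and follows the same strategy as the paper: reduce to stalks (using that adic spaces are honest topological spaces), identify the resulting complex with the augmented chain complex of a combinatorial simplicial set, and show contractibility via a minimal element furnishing a cone point.

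The one place where you make your life harder than necessary is the ``combinatorial accounting'' you flag at the end. The paper sidesteps this entirely by working not with the poset of pairs $(I,gP_I)$ but directly with the poset
\[
X=\bigl\{U\subsetneq k^{n+1}\text{ nonzero subspace}\ \big|\ x\in(\spec^{\sad})^{-1}(\PR(U))\bigr\},
\]
ordered by inclusion. The point is that $\coprod_{\#(\Delta\setminus I)=r}G/P_I$ is in natural bijection with the set of length-$r$ chains of proper nonzero subspaces of $k^{n+1}$ (partial flags), and under this bijection the condition $\bar x\in g.Y_I$ becomes exactly the condition that $\bar x$ lies in the smallest member of the chain, i.e.\ that the chain lies in $X$. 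Thus the localized complex is on the nose the chain complex of the order complex of $X$, with no multiplicities to track. Contractibility then follows from Quillen's criterion \cite[1.5]{Quillen}: any minimal $U_0\in X$ satisfies $U_0\subset U$ for every $U\in X$, since $(\spec^{\sad})^{-1}(\PR(U_0))\cap(\spec^{\sad})^{-1}(\PR(U))=(\spec^{\sad})^{-1}(\PR(U_0\cap U))$ forces $U_0\cap U\neq 0$ and hence $U_0\cap U=U_0$ by minimality.
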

\begin{proof}
 For $i\in\{0,\ldots,n-1\},$ set 
 $$\ECal_i=\bigoplus_{I\subsetneq\Delta\atop\# I=n-1-i}\ECal_I.$$ 
 As was mentioned in the introduction to this chapter, sheaves on adic spaces are in particular sheaves on topological spaces. Therefore, to prove the proposition, it is sufficient to check that for each point $x\in(\spec^{\sad})^{-1}(\CDrin),$ the localized complex 
 \begin{equation*}
  0\rightarrow (\ECal)_{x}\rightarrow (\ECal_{0})_{x}\rightarrow (\ECal_{1})_{x}\rightarrow\ldots\rightarrow (\ECal_{n-1})_{x}\rightarrow 0\nonumber
 \end{equation*}
 is acyclic. By construction, this last complex is equal to
 \begin{equation}\label{ChainFKom}
  0\rightarrow\ECal_{x} \rightarrow\bigoplus_{I\subsetneq\Delta\atop\# I =n-1}\bigoplus_{g\in G\slash P_I\atop x\in (\spec^{\sad})^{-1}(g.Y_I)}\ECal_{x}\rightarrow\ldots\rightarrow  \bigoplus_{I\subsetneq\Delta\atop\# I =1}\bigoplus_{g\in G\slash P_I\atop x\in (\spec^{\sad})^{-1}(g.Y_I)}\ECal_{x}\rightarrow \bigoplus_{g\in G\slash\Parbf_{\emptyset}(k)\atop x\in(\spec^{\sad})^{-1}(g.Y_{\emptyset})}\ECal_{x}\rightarrow 0.
 \end{equation}
 Let 
 $$X=\left\{g P_I\in G\slash P_I\text{ }\big |\text{ } \# I =n-1,x\in (\spec^{\sad})^{-1}(g.Y_I)\right\}$$
 and equip $X$ with a partial order structure ``$\leq$'' by identifying
 $$\coprod_{I\subset\Delta\atop \# I=n-1} G\slash P_I$$
 with 
 $$\left\{U\subset k^{n+1}\text{ }k\text{-subspace}\mid U\neq (0),k^{n+1}\right\}$$
 and its natural partial order structure given by inclusion. Then $X$ is identified with the set
 $$\left\{U\subset k^{n+1}\text{ }k\text{-subspace}\text{ }\big |\text{ } U\neq (0),k^{n+1}\text{ and }x\in  (\spec^{\sad})^{-1}(\PR(U))\right\}.$$
 Furthermore, $X$ is nonempty since $(\spec^{\sad})^{-1}(\CDrin)$ is covered by the union of all $(\spec^{\sad})^{-1}(\PR(U))$ with $U$ running through all proper subspaces of $k^{n+1}.$ 
 
 For $i\in\{0,\ldots,n-1\},$ let 
 $$X^i=\{(x_0,\ldots,x_i)\mid\forall j\in\{0,\ldots,i\}: x_j\in X, x_0< x_1<\ldots< x_i\}.$$ 
 By construction, $X^{\bullet}=\bigcup_{i=0}^{n-1}X^i$ 
 has the structure of a simplicial complex with $X^i$ the set of $i$-simplices and (\ref{ChainFKom}) is the chain complex with values in $\ECal_{x}$ associated with this simplicial complex.
 
 Let $U_0\in X$ be a subspace of minimal dimension and let $U\in X$ be arbitrary. It follows that 
 $$x \in (\spec^{\sad})^{-1}(\PR(U_0))\cap (\spec^{\sad})^{-1}(\PR(U))= (\spec^{\sad})^{-1}(\PR(U_0)\cap \PR(U))=(\spec^{\sad})^{-1}(\PR(U_0\cap U)).$$
 so that $U\cap U_0\neq (0)$ (as otherwise $\PR(U\cap U_0)$ would be empty). By minimality of $U_0,$ this implies that $U_0\cap U= U_0.$ Therefore, the identity map 
 $$\id:X\rightarrow X, U\mapsto U,$$
 fulfills
 $$U_0,U\leq \id(U)$$
 for all $U\in X$ and by Quillen's criterion (cf.\ \cite[1.5]{Quillen}), the complex (\ref{ChainFKom}) is then acyclic. This proves the proposition.
\end{proof}
 
\begin{cor}\label{FundaKompRevArrCorr}
 Let $\ECal^{\bullet}$ be a finite complex of sheaves of abelian groups on $(\spec^{\sad})^{-1}(\CDrin).$ Then, the complex 
 \begin{eqnarray}\label{FDKomp}
  0\rightarrow\ECal^{\bullet}\rightarrow\bigoplus_{I\subsetneq\Delta\atop \# I=n-1}\ECal_I^{\bullet}\rightarrow \ldots \rightarrow\bigoplus_{I\subsetneq\Delta\atop \# I=1}\ECal_I^{\bullet}\rightarrow \ECal_{\emptyset}^{\bullet}\rightarrow  0
 \end{eqnarray}
 is acyclic, i.e.\ (\ref{FDKomp}) is an exact sequence in the category of complexes of sheaves of abelian groups on $(\spec^{\sad})^{-1}(\CDrin).$
\end{cor}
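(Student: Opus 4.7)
The plan is to reduce the statement for complexes directly to the single-sheaf statement of Proposition \ref{FundaKompAd} by exploiting functoriality. Since kernels and cokernels in the abelian category of complexes of sheaves on $(\spec^{\sad})^{-1}(\CDrin)$ are computed degree by degree, a sequence of complexes is exact in this category if and only if it is exact in each fixed cohomological degree. So first I would fix $j \in \Z$ and reduce to proving that the degree-$j$ row of (\ref{FDKomp}), namely
$$0 \rightarrow \ECal^j \rightarrow \bigoplus_{I \subsetneq \Delta,\, \#I = n-1} (\ECal^j)_I \rightarrow \cdots \rightarrow (\ECal^j)_\emptyset \rightarrow 0,$$
is an exact sequence of sheaves of abelian groups.

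Next I would check that this degree-$j$ row is literally the complex (\ref{OrlikFundKom}) associated with the single sheaf $\ECal^j$. This comes down to two observations about the construction $\ECal \mapsto \ECal_I$: first, the functors $(\Phi_{g,I}^{\sad})_*$ and $(\Phi_{g,I}^{\sad})^*$ are additive and hence commute with the formation of the degree-$j$ term of a complex, so that $(\ECal^\bullet)_I$ is the complex whose $j$-th term is $(\ECal^j)_I$; second, the morphisms $p_{I,J}^{g,h}$ are built from the unit and counit of the adjunction between pullback and pushforward applied to the argument sheaf, and so they are natural transformations in that argument. Hence the horizontal arrows $d_{I,J}$ in (\ref{FDKomp}) commute with the internal differentials of $\ECal^\bullet$, and restricting to degree $j$ really does reproduce the Orlik complex of the single sheaf $\ECal^j$.

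The conclusion is then immediate from Proposition \ref{FundaKompAd} applied with $\ECal$ replaced by $\ECal^j$: this establishes exactness of the degree-$j$ row for every $j$, and thereby the desired exactness of (\ref{FDKomp}) in the category of complexes. The only minor subtlety is the bookkeeping verification that the Orlik complex construction is fully functorial on the category of sheaves, but no genuine obstacle arises beyond what was already handled in the preceding proposition.
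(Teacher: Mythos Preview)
Your argument is correct and is precisely the intended one: the paper states the corollary without proof because it follows immediately from Proposition \ref{FundaKompAd} by the degreewise observation you spell out. There is nothing to add.
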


\subsection{Construction of a Spectral Sequence}\label{ThreeThreeOne}

Denote by 
$$f:\text{ }]\CDrin[^{\sad}=(\spec^{\sad})^{-1}(\CDrin)^o\hookrightarrow (\spec^{\sad})^{-1}(\CDrin)$$ 
the canonical open immersion of the interior of $(\spec^{\sad})^{-1}(\CDrin).$
For the rest of this section, let $$\ECal^{\bullet}=\Om_{(\spec^{\sad})^{-1}(\CDrin)}\rightarrow\Omega^1_{(\spec^{\sad})^{-1}(\CDrin)}\rightarrow\Omega^2_{(\spec^{\sad})^{-1}(\CDrin)}\rightarrow\ldots\rightarrow\Omega^n_{(\spec^{\sad})^{-1}(\CDrin)}$$ 
be the de Rham complex on $(\spec^{\sad})^{-1}(\CDrin).$ Plug this complex into (\ref{FDKomp}) and set 
\begin{eqnarray*}
 \ECal_{-1}^{\bullet} &=& \ECal^{\bullet},\\
 \ECal_i^{\bullet} &=& \bigoplus_{I\subsetneq\Delta\atop \#I=n-1-i}\ECal_I^{\bullet}
\end{eqnarray*}
for $i=0,\ldots,n-1.$  The functor $f^*$ is exact and therefore, the complex
$$0\rightarrow f^*\ECal_{-1}^{\bullet}\rightarrow f^*\ECal_{0}^{\bullet}\rightarrow f^*\ECal_{1}^{\bullet}\rightarrow \ldots\rightarrow f^*\ECal_{n-1}^{\bullet}\rightarrow  0$$
(consisting of complexes of sheaves on $]\CDrin[^{\sad}$) is still acyclic. Application of the Godement functor $\GCal$ (cf.\ e.g.\ \cite[6.72-73]{Rotman}) to each sheaf appearing in (\ref{FDKomp}) plus application of the global sections functor induces a first quadrant double complex of $K$-vector spaces

\begin{diagram}
 \Gamma(]\CDrin[^{\sad},\GCal(f^*\ECal_{0}^n))     & \rTo & \Gamma(]\CDrin[^{\sad},\GCal(f^*\ECal_{1}^n))     & \rTo & \ldots & \rTo    & \Gamma(]\CDrin[^{\sad},\GCal(f^*\ECal_{n-1}^n)) \\
 \uTo                                         &     & \uTo&&& & \uTo \\
 \vdots                                       &     &      \vdots                                         &        &   \vdots &&      \vdots\\ 
 \uTo     &&\uTo                                    &     &                                           &                      & \uTo \\
 \Gamma(]\CDrin[^{\sad},\GCal(f^*\ECal_{0}^1))     & \rTo    &     \Gamma(]\CDrin[^{\sad},\GCal(f^*\ECal_{1}^1))     & \rTo &\ldots                                         & \rTo       &          \Gamma(]\CDrin[^{\sad},\GCal(f^*\ECal_{n-1}^1))\\ 
 \uTo         &      &  \uTo  &&                                         &                      & \uTo \\
 \Gamma(]\CDrin[^{\sad},\GCal(f^*\ECal_{0}^0))     & \rTo & \Gamma(]\CDrin[^{\sad},\GCal(f^*\ECal_{1}^0)) &\rTo & \ldots & \rTo    & \Gamma(]\CDrin[^{\sad},\GCal(f^*\ECal_{n-1}^0)) 
\end{diagram}

\bigskip
\noindent with exact rows.

There is then canonically a quasi-isomorphism of the complex $\Gamma(]\CDrin[^{\sad},\GCal(f^*\ECal^{\bullet}))$ into the associated total complex of the above double complex which implies the existence of a spectral sequence
\begin{eqnarray*}
E_1^{r,s}=h^s(\Gamma(]\CDrin[^{\sad},\GCal(f^*\ECal_r^{\bullet})))&\Longrightarrow&h^{r+s}(\Gamma(]\CDrin[^{\sad},\Tot(\GCal(f^*\ECal_{\bullet}^{\bullet}))))\\ 
&=& h^{r+s}(\Gamma(]\CDrin[^{\sad},\GCal(f^*\ECal^{\bullet})))\\
&=& h^{r+s}(\Gamma(]\CDrin[^{\sad},f^*\GCal(\ECal^{\bullet})))\\
&=& h^{r+s}(\Gamma(]\CDrin[^{\sad},\GCal(\ECal^{\bullet}))).
\end{eqnarray*}

It follows from the fact that hypercohomology can be computed from flasque resolutions (cf.\ \cite[Appendix]{EsVie}) and from the fact that $\CDrin$ is closed in $\PR_k^n,$ that
$$h^{\ast}(\Gamma(]\CDrin[^{\sad},\GCal(\ECal^{\bullet})))=\HH_{\rm rig}^{\ast}(\CDrin\slash K),$$ 
see also Subsection \ref{BerDefRiCoh}.

\subsection{Evaluation of the Spectral Sequence}\label{ThreeThreeTwo}

\subsubsection{The \texorpdfstring{$E_1$}{E1}-Page}

\begin{lemma}\label{FirstRigE1Lem} Let $r,s\in\Z.$ Then there is an identification
 $$E_1^{r,s}=\bigoplus_{I\subsetneq\Delta\atop \#I=n-1-r}\Ind_{ P_I}^{ G}\HH_{\rm rig}^s(Y_I\slash K).$$
\end{lemma}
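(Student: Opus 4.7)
The plan is to unpack the direct-sum definition of $\ECal_r^{\bullet}$ and reduce to a single-summand computation. Since $\ECal_r^{\bullet}=\bigoplus_{\#I=n-1-r}\bigoplus_{g\in G/P_I}\ECal_{g,I}^{\bullet}$, and since the pullback functor $f^*$, the Godement functor $\GCal$, the global-sections functor and the cohomology-of-a-complex functor all commute with finite direct sums, it suffices to show that for a single pair $(g,I)$ one has
$$h^s\bigl(\Gamma(]\CDrin[^{\sad},\GCal(f^*\ECal_{g,I}^{\bullet}))\bigr)=\HH_{\rm rig}^s(g.Y_I/K),$$
and then to identify the $G$-action on the direct sum over $g\in G/P_I$ as an induced representation.

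For the single-summand step, recall that $\ECal_{g,I}^{\bullet}=(\Phi_{g,I})_*(\Phi_{g,I})^*\ECal^{\bullet}$ where $\ECal^{\bullet}$ is the de Rham complex on $(\spec^{\sad})^{-1}(\CDrin)$; since $\Phi_{g,I}$ is a closed embedding, $(\Phi_{g,I})^*\ECal^{\bullet}$ is the de Rham complex on $(\spec^{\sad})^{-1}(g.Y_I)$. The key observation (already exploited in Subsection \ref{SecThreeTwo}) is that the interior operation commutes with $\Phi_{g,I}$, so that pulling back along $f$ turns $\ECal_{g,I}^{\bullet}$ into the pushforward of the de Rham complex $\Omega^{\bullet}_{]g.Y_I[^{\sad}/K}$ along the closed embedding $]g.Y_I[^{\sad}\hookrightarrow\,]\CDrin[^{\sad}$. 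Because Godement resolutions are flasque and pushforward of flasque sheaves along a closed immersion remains acyclic for global sections, the complex $\Gamma(]\CDrin[^{\sad},\GCal(f^*\ECal_{g,I}^{\bullet}))$ agrees with $\Gamma(]g.Y_I[^{\sad},\GCal(\Omega^{\bullet}_{]g.Y_I[^{\sad}/K}))$. Its $s$-th cohomology is the hypercohomology $\Hyp^s(]g.Y_I[^{\sad},\Omega^{\bullet}_{]g.Y_I[^{\sad}/K})$, and since $g.Y_I$ is closed in $\PR_k^n$, the discussion of Subsection \ref{BerDefRiCoh} identifies this with $\HH_{\rm rig}^s(g.Y_I/K)$.

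For the final identification, each $g\in G$ induces an isomorphism $g:Y_I\xrightarrow{\sim}g.Y_I$ of projective $k$-varieties (both being copies of $\PR_k^{i(I)}$), hence a canonical $K$-linear isomorphism $\HH_{\rm rig}^s(g.Y_I/K)\cong\HH_{\rm rig}^s(Y_I/K)$. Because $P_I=\Parbf_{I,k}(k)$ is precisely the stabilizer of $Y_I$ in $G$, the $G$-module
$$\bigoplus_{g\in G/P_I}\HH_{\rm rig}^s(g.Y_I/K)$$
is by definition the induced representation $\Ind_{P_I}^{G}\HH_{\rm rig}^s(Y_I/K)$. Assembling the summands gives the claimed formula.

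The main technical obstacle will be the clean verification that $f^*$ commutes with $(\Phi_{g,I})_*$ in the required sense, i.e.\ that the interior of the preimage of $g.Y_I$ under $\spec^{\sad}$ indeed equals $]g.Y_I[^{\sad}\cap\,]\CDrin[^{\sad}=\,]g.Y_I[^{\sad}$ and that the resulting pushforward of the de Rham complex of the tube agrees, after the Godement step, with $f^*\ECal_{g,I}^{\bullet}$ up to quasi-isomorphism. Everything else is bookkeeping in terms of the coset decomposition $G/P_I$ and functoriality of rigid cohomology under the $G$-action.
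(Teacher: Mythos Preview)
Your proposal is correct and follows essentially the same approach as the paper: unwind the direct sum, use compatibility of the Godement resolution with pushforward along the closed embeddings $\Phi_{g,I}^{\sad}$ to reduce global sections on $]\CDrin[^{\sad}$ to global sections on $]g.Y_I[^{\sad}$ (via the identity $]\CDrin[^{\sad}\cap(\spec^{\sad})^{-1}(g.Y_I)=]g.Y_I[^{\sad}$), identify the resulting hypercohomology with $\HH_{\rm rig}^s(g.Y_I/K)$, and recognise the coset sum as an induced representation. The only point where the paper adds detail is the group action: rather than invoking functoriality of rigid cohomology for $G$ acting on the $k$-varieties $g.Y_I$, the paper lifts to the $\G(\VCal)$-action on $\PR_K^{n,\rm rig}$, checks $g.\,]Y_I[=\,]\bar g.Y_I[$ via the commuting square with $\spec^{\rm rig}$, and passes through $\G(\VCal)/\Parbf_I(\VCal)(1)\cong G/P_I$ --- this extra bookkeeping is not needed for the lemma itself but is reused in Section~\ref{SecThreeFour}.
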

\begin{proof}
Using compatibility of Godement resolution with direct and inverse images of sheaves in the present situation, one calculates as follows:
 {\allowdisplaybreaks \begin{eqnarray*}
  E_1^{r,s} &=& h^{s}\left(\Gamma\left(]\CDrin[^{\sad},\GCal\left(f^*\ECal_r^{\bullet}\right)\right)\right)\nonumber\\
  &=& h^{s}\left(\Gamma\left(]\CDrin[^{\sad},\GCal\left(\ECal_r^{\bullet}\right)\right)\right)\nonumber\\
  &=& h^{s}\left(\Gamma\left(]\CDrin[^{\sad},\GCal\left(\bigoplus_{I\subsetneq\Delta,\atop \#I=n-1-r}\bigoplus_{g\in   G\slash  P_I}(\Phi_{g,I}^{\sad})_*(\Phi_{g,I}^{\sad})^*\ECal^{\bullet}\right)\right)\right)\nonumber\\
  &=& \bigoplus_{I\subsetneq\Delta,\atop \#I=n-1-r}\bigoplus_{g\in   G\slash  P_I}h^{s}\left(\Gamma\left(]\CDrin[^{\sad},\GCal\left((\Phi_{g,I}^{\sad})_*(\Phi_{g,I}^{\sad})^*\ECal^{\bullet}\right)\right)\right)\nonumber\\
  &=& \bigoplus_{I\subsetneq\Delta,\atop \#I=n-1-r}\bigoplus_{g\in   G\slash  P_I}h^{s}\left(\Gamma\left(]\CDrin[^{\sad},(\Phi_{g,I}^{\sad})_*\GCal\left((\Phi_{g,I}^{\sad})^*\ECal^{\bullet}\right)\right)\right)\nonumber\\
  &=& \bigoplus_{I\subsetneq\Delta,\atop \#I=n-1-r}\bigoplus_{g\in   G\slash  P_I}h^{s}\left(\Gamma\left(]\CDrin[^{\sad}\cap(\spec^{\sad})^{-1}(g.Y_I),\GCal\left((\Phi_{g,I}^{\sad})^*\ECal^{\bullet}\right)\right)\right).\nonumber
  \end{eqnarray*}}
  By construction, 
  $$]\CDrin[^{\sad}\cap(\spec^{\sad})^{-1}(g.Y_I)=]g.Y_I[^{\sad}$$
  for all $I\subsetneq\Delta$ and all $g\in  G\slash P_I.$ It then follows that 
  \begin{eqnarray*}
   E_1^{r,s} &=& \bigoplus_{I\subsetneq\Delta,\atop \#I=n-1-r}\bigoplus_{g\in   G\slash  P_I}h^{s}\left(\Gamma\left(]g.Y_I[^{\sad},\GCal\left((\Phi_{g,I}^{\sad})^*\ECal^{\bullet}\right)\right)\right).\nonumber\\
   &=& \bigoplus_{I\subsetneq\Delta,\atop \#I=n-1-r}\bigoplus_{g\in   G\slash  P_I}\Hyp^{s}\left(]g.Y_I[^{\sad},\ECal_{\mid ]g.Y_I[^{\sad}}^{\bullet}\right)\\
   &=& \bigoplus_{I\subsetneq\Delta,\atop \#I=n-1-r}\bigoplus_{g\in   G\slash  P_I}\Hyp^{s}\left(]g.Y_I[,\ECal_{\mid ]g.Y_I[}^{\bullet}\right).\nonumber
  \end{eqnarray*}
 For $g\in\G(\VCal)$ also denote by $g$ its induced automorphism on $\PR_K^{n,\rm rig}$ via the action
 $$\G(\VCal) \times \PR_K^{n,\rm rig} \rightarrow\PR_K^{n,\rm rig}, (g,x)\mapsto g.x=xg^{-1}.$$
 Then the diagram
 \begin{diagram}
  \PR_K^{n,\rm rig} & \rTo^{g} & \PR_K^{n,\rm rig}\\
  \dTo^{\spec^{\rm rig}} & & \dTo_{\spec^{\rm rig}}\\
  \PR_k^n & \rTo^{\overline g} & \PR_k^n
 \end{diagram}
 commutes, where $\overline g$ is the image of $g$ under the canonical map $\G(\VCal)\rightarrow G.$ Therefore,
 \begin{eqnarray*}
  g.]Y_I[ &=& g.\left\{x\in\PR_K^{n,\rm rig}\text{ }\big |\text{ } \spec^{\rm rig}(x)\in Y_I\right\}\\
  &=& \left\{g.x\in\PR_K^{n,\rm rig}\text{ }\big |\text{ } \spec^{\rm rig}(x)\in Y_I\right\}\\
  &=& \left\{x\in\PR_K^{n,\rm rig}\text{ }\big |\text{ } \spec^{\rm rig}(g^{-1}.x)\in Y_I\right\}\\
  &=& \left\{x\in\PR_K^{n,\rm rig}\text{ }\big |\text{ } (\overline g)^{-1}.\spec^{\rm rig}(x)\in Y_I\right\}\\
  &=& \left\{x\in\PR_K^{n,\rm rig}\text{ }\big |\text{ } \spec^{\rm rig}(x)\in \overline g.Y_I\right\}\\
  &=& ]\overline g.Y_I[.
 \end{eqnarray*}
 Define $\Parbf_I(\VCal)(1)$ to be the preimage of $ P_I$ with respect to the above map $\G(\VCal)\rightarrow G.$ Then $\Parbf_I(\VCal)(1)$ stabilizes $]Y_I[.$ For $g\in\G(\VCal)\slash\Parbf_I(\VCal)(1),$ set
 $$g.\Hyp^s(]Y_I[,\ECal_{\mid]Y_I[}^\bullet)=\Hyp^s(g.]Y_I[,\ECal_{\mid g.]Y_I[}^\bullet).$$
 This is well-defined and from the identity above, since $\G(\VCal)\slash \Parbf_I(\VCal)(1)\cong  G\slash P_I,$ it then follows that
 \begin{eqnarray*}
 E_1^{r,s} &=&  \bigoplus_{I\subsetneq\Delta,\atop \#I=n-1-r}\bigoplus_{g\in  \G(\VCal)\slash \Parbf_I(\VCal)(1)}\Hyp^{s}\left(g.]Y_I[,\ECal_{\mid g.]Y_I[}^{\bullet}\right)\\
 &=&  \bigoplus_{I\subsetneq\Delta,\atop \#I=n-1-r}\bigoplus_{g\in  \G(\VCal)\slash \Parbf_I(\VCal)(1)}g.\Hyp^{s}\left(]Y_I[,\ECal_{\mid ]Y_I[}^{\bullet}\right)\\
 &=& \bigoplus_{I\subsetneq\Delta,\atop \#I=n-1-r}\bigoplus_{g\in   G\slash  P_I}g.\HH_{\rm rig}^{s}\left(Y_I\slash K\right)\nonumber\\
 &=& \bigoplus_{I\subsetneq\Delta,\atop \#I=n-1-r}\Ind_{ P_I}^{ G}\HH_{\rm rig}^{s}\left(Y_I\slash K\right).\nonumber
 \end{eqnarray*}
 The last identity holds since, by functoriality, $\HH_{\rm rig}^s(Y_I\slash K)$ is a $ P_I$-module and as a $K$-vector space, $g.\HH_{\rm rig}^{s}\left(Y_I\slash K\right)$ is isomorphic to $\HH_{\rm rig}^s(Y_I\slash K).$ This finishes the proof.
\end{proof}

Directly from the definition, the fact that $Y_I\cong\PR_k^{i_1}$ for $i_1\in\{0,\ldots, n-1\}$ minimal with $\alpha_{i_1}\in\Delta\setminus I,$ gives $$\HH_{\rm rig}^{\ast}(Y_I\slash K)=\bigoplus_{j=0}^{i_1} K(-j)[-2j],$$ 
see (\ref{Brucke}). For $j\in\{0,\ldots,n-1\},$ set 
$$I_j=\begin{cases}
 \{\alpha_0,\ldots,\alpha_{j-1}\} &\text{ if }j\in\{1,\ldots,n-1\}\\
 \emptyset &\text{ if }j=0.\end{cases}$$ Then, a row $E_1^{\bullet,s}$ of the first page of the spectral sequence has non-zero entries only for even $s\in\{0,2,\ldots,2n-2\}$ and reads
\begin{eqnarray*}
E_1^{\bullet,s}&:& \text{ }E_1^{ 0,s}=\bigoplus_{I_{\frac s 2}\subset I\subsetneq\Delta\atop\#I= n-1}\Ind_{ P_I}^{ G}K(-\frac s 2)\rightarrow \ldots\rightarrow\bigoplus_{I_{\frac s 2}\subset I\subsetneq\Delta\atop\#I=\frac s 2 +1}\Ind_{ P_I}^{ G}K(-\frac s 2)\\ &&\rightarrow E_1^{n-1-\frac s 2,s}=\Ind_{P_{I_{\frac s 2}}}^{ G}K(-\frac s 2).  
\end{eqnarray*}

\subsubsection{The \texorpdfstring{$E_2$}{E2}-Page}

For each proper subset $J\subsetneq\Delta$, the sequence 
\begin{equation}\label{SteinRes}
0\rightarrow K\rightarrow\bigoplus_{J\subset I\subsetneq\Delta\atop\# (\Delta\setminus I)=1}\Ind_{ P_I}^{ G}K\rightarrow\ldots\rightarrow\bigoplus_{J\subset I\subsetneq\Delta\atop \# (\Delta\setminus I)=n-1-\#J}\Ind_{ P_I}^{ G}K\rightarrow v_{ P_J}^{ G}(K)\rightarrow 0
\end{equation}
of $ G$-modules is exact, see e.g. \cite[3.2.5]{DOR}. Therefore, the $E_2$-terms of the spectral sequence can be read off:
\begin{lemma}\label{ETO} Let $r,s\in\Z.$ Then there is an identification
 \begin{equation*}
  E_2^{r,s}=h^{r}(E^{\bullet,s})=\begin{cases}
            {v_{P_{I_{\frac s 2}}}^{ G}}(K)(-\frac s 2) &\text{ if }s\in\{0,2,\ldots,2(n-2)\}, r=n-1-\frac s 2\\
            K(-\frac s 2)&\text{ if }s\in\{0,2,\ldots,2(n-2)\}, r=0\\
            \Ind_{P_{I_{n-1}}}^{ G}K(-(n-1)) &\text{ if }s=2n-2,r=0\\
            0 &\text{ else.}
           \end{cases}\nonumber
\end{equation*}
\end{lemma}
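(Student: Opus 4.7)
My approach is to recognize each nontrivial row $E_1^{\bullet,s}$ (for $s=2t$ with $t\in\{0,\dots,n-1\}$) as essentially the resolution (\ref{SteinRes}) applied to $J=I_t$ and Tate-twisted by $K(-t)$, up to removal of its leading $K$ and replacement of its trailing quotient by $\Ind_{P_{I_t}}^GK$. The claim then follows from the exactness of (\ref{SteinRes}).

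First I would identify the differentials of the row. By Lemma \ref{FirstRigE1Lem} and the definition of the Orlik complex, the map $E_1^{r,s}\to E_1^{r+1,s}$ sends the summand indexed by $J$ (with $\#J=n-1-r$ and $J\supset I_t$) to the summands indexed by $I\subsetneq J$ with $\#I=\#J-1$, via the signed composition built from the inclusion $\Ind_{P_J}^G\hookrightarrow\Ind_{P_I}^G$ (coming from $P_I\subset P_J$) and the pullback $\HH_{\rm rig}^{s}(Y_J\slash K)\to\HH_{\rm rig}^{s}(Y_I\slash K)$ along $Y_I\hookrightarrow Y_J$. Since $I_t\subset I\subsetneq J$ forces both $Y_I$ and $Y_J$ to be linear projective subspaces of $\PR_k^n$ of dimensions $\geq t$, this pullback is the identity on $K(-t)$. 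With the signs $(-1)^i$ from $d_{I,J}$, the $E_1$-differential of the row is therefore the signed sum of inclusions $\Ind_{P_J}^GK(-t)\hookrightarrow\Ind_{P_I}^GK(-t)$, which is precisely the differential of (\ref{SteinRes}) with $J=I_t$.

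Given this identification, I would read off the cohomology directly. For $t\leq n-2$, the row is (\ref{SteinRes}) with $J=I_t$ stripped of its leading $K$ and with its final quotient $v_{P_{I_t}}^G(K)$ replaced by $\Ind_{P_{I_t}}^GK$. By definition of the generalized Steinberg representation, the kernel of $\Ind_{P_{I_t}}^GK\twoheadrightarrow v_{P_{I_t}}^G(K)$ is $\sum_{P_{I_t}\subsetneq H'}\Ind_{H'}^GK=\sum_{\#I=t+1,\,I\supsetneq I_t}\Ind_{P_I}^GK$, which is exactly the image of the last differential of the row. Hence the cokernel at $r=n-1-t$ is $v_{P_{I_t}}^G(K)(-t)$, while the kernel at $r=0$ equals the image of the injection $K\hookrightarrow\bigoplus_{\#I=n-1}\Ind_{P_I}^GK$ from (\ref{SteinRes}), giving $K(-t)$. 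All interior cohomologies vanish by exactness. In the boundary case $t=n-1$ the row degenerates to the single term $\Ind_{P_{I_{n-1}}}^GK(-(n-1))$ at $r=0$, which is its own $h^0$.

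The main obstacle is the identification of the differentials above: one has to trace the adjunction maps $p_{I,J}^{g,h}$ through the Godement resolution from the proof of Lemma \ref{FirstRigE1Lem} and verify that, after passing to cohomology on $E_1$, they descend to the natural inclusions between induced representations. Once this is verified, the rest is a direct application of the exactness of the Steinberg resolution.
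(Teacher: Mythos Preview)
Your proposal is correct and follows exactly the approach the paper takes: identify each nontrivial row $E_1^{\bullet,2t}$ with the truncation of the Steinberg resolution~(\ref{SteinRes}) for $J=I_t$ (twisted by $K(-t)$), and read off the cohomology. The paper states this in a single sentence, simply citing the exactness of~(\ref{SteinRes}) and declaring that the $E_2$-terms ``can be read off''; you have supplied the details it leaves implicit, in particular the verification that the $E_1$-differentials coincide with the signed inclusion maps of the resolution and the case distinction $t\le n-2$ versus $t=n-1$.
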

Taking into account the fact that there are no nontrivial homomorphisms of modules with different Tate twist, one can now read off from the shape of the $E_2$-page that all resulting morphisms in $E_m,$ $m\geq 2,$ are necessarily trivial and therefore, the spectral sequence degenerates in the $E_2$-page.

\subsection{Computation of the Rigid Cohomology Modules}\label{ThreeThreeThree}

Evaluation of the filtration associated with the spectral sequence now yields the rigid cohomology of $\CDrin.$ 

\begin{prop}\label{RigCohCDrinComp}
 The rigid cohomology modules of $\CDrin$ have the following shape:
 $$\HH_{\rm rig}^s(\CDrin\slash K)=\begin{cases}
                                                  K(-\frac s 2) &\text{ if }s\in\{0,\ldots,n-2\}\text{ even}\\
                                                  K(-\frac s 2)\oplus {v_{P_{I_{n-1-s}}}^{ G}}(K)(n-1-s) &\text{ if }s\in\{n-1,\ldots,2n-3\}\text{ even}\\
                                                  {v_{P_{I_{1+s-n}}}^{ G}}(K)(n-1-s) &\text{ if }s\in\{n-1,\ldots,2n-3\}\text{ odd}\\
                                                  \Ind_{P_{I_{n-1}}}^{ G}(K)(-(n-1)) &\text{ if }s=2n-2\\
                                                  0 &\text{ else}.
                                                 \end{cases}
$$
\end{prop}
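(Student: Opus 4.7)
The plan is to read $\HH_{\rm rig}^N(\CDrin\slash K)$ off the $E_2$-page of the spectral sequence built in Subsection \ref{ThreeThreeOne}. By Lemma \ref{ETO} together with the Tate-twist observation stated just before this proposition, the spectral sequence degenerates at $E_2$, so $\HH_{\rm rig}^N(\CDrin\slash K)$ carries a finite filtration whose associated graded is $\bigoplus_{r+s=N} E_2^{r,s}$. The proof therefore reduces to two tasks: evaluate this direct sum for each $N$, and verify that the filtration actually splits.

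First I would fix $N$ and enumerate the nonzero entries $E_2^{r,s}$ with $r+s=N$. Lemma \ref{ETO} supplies three families: a ``corner'' $K(-s/2)$ at $(0,s)$ for even $s\in\{0,\ldots,2n-4\}$; a ``Steinberg'' $v_{P_{I_{s/2}}}^{G}(K)(-s/2)$ at $(n-1-s/2,\,s)$ for the same range of $s$; and an ``exceptional'' $\Ind_{P_{I_{n-1}}}^{G}K(-(n-1))$ at $(0,2n-2)$. Imposing $r+s=N$ selects at most one candidate per family: the corner appears exactly when $N$ is even and $0\leq N\leq 2n-4$; the Steinberg appears exactly when $n-1\leq N\leq 2n-3$ (via $s=2(N-n+1)$) and then reads $v_{P_{I_{1+N-n}}}^{G}(K)(n-1-N)$; the exceptional appears only for $N=2n-2$. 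Collecting these contributions degree by degree reproduces the four cases of the statement: below the middle range only the corner survives; in the middle range the Steinberg always contributes and the corner contributes additionally for even $N$; at $N=2n-2$ only the exceptional piece is left.

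The only remaining point is whether the filtration really splits in the middle range, where a corner and a Steinberg summand coexist. This is exactly where Frobenius weights intervene: the two summands carry the Tate twists $-N/2$ and $-(N-n+1)$ respectively, and these would coincide only at the already excluded degree $N=2n-2$. Consequently, there are no nontrivial morphisms between them in the category of $F$-isocrystals over $K$, so any extension of one by the other is trivial and the filtration splits canonically. I expect the main obstacle to be purely bookkeeping — matching the two possible index conventions $I_{1+s-n}$ and $I_{s/2}$ consistently in every case — rather than any new homological input, since the degeneration and the $E_2$-description have already been established.
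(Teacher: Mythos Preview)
Your proposal is correct and follows essentially the same route as the paper: read off the associated graded from the $E_2$-page described in Lemma \ref{ETO}, enumerate the contributions in each total degree, and then split the resulting two-step filtrations using the fact that the summands carry distinct Tate twists. One small imprecision: the absence of nontrivial \emph{morphisms} between objects of different weight does not by itself force extensions to split; the relevant (standard) fact, which the paper also invokes, is that there are no non-split \emph{extensions} between modules of different Tate twist.
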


\begin{proof}
Recall that
$$h^{\ast}(\Gamma(]\CDrin[^{\sad},\GCal(\ECal^{\bullet})))=\HH_{\rm rig}^{\ast}(\CDrin\slash K).$$ 
By construction, $E_2=E_{\infty}$ now describes steps of descending filtrations of $ G$-modules on each $\HH_{\rm rig}^{s}(\CDrin\slash K),$ $s\in\N_0,$ via 
$$E_2^{r,s}=\gr^r(\HH_{\rm rig}^{r+s}(\CDrin\slash K)).$$
There are thus descending filtrations on each $\HH_{\rm rig}^s(\CDrin\slash K)$ with filtration steps
$$\gr^r(\HH_{\rm rig}^s(\CDrin\slash K))=E_2^{r,s-r}=\begin{cases}
            {v_{P_{I_{\frac {s-r} 2}}}^{ G}}(K)(-\frac {s-r} 2) &\text{if }s-r\in\{0,2,\ldots,2(n-2)\},\\ &\text{ }r=n-1-\frac {s-r} 2\\
            K(-\frac {s-r} 2)&\text{if }s-r\in\{0,2,\ldots,2(n-2)\},\\ &\text{ }r=0\\
            \Ind_{P_{I_{n-1}}}^{ G}K(-(n-1)) &\text{if }s-r=2n-2,r=0\\
            0 &\text{else.}
           \end{cases}\nonumber$$
This shows that for each $s\in\{0,\ldots,2n-2\},$ the associated graded module of the rigid cohomology module $\HH_{\rm rig}^s(\CDrin\slash K)$ has the following shape:
$$\gr^{\bullet}(\HH_{\rm rig}^s(\CDrin\slash K))=\begin{cases}
                                                  K(-\frac s 2) &\text{\rm if }s\in\{0,\ldots,n-2\}\text{ even}\\
                                                  K(-\frac s 2)\oplus {v_{P_{I_{n-1-s}}}^{ G}}(K)(n-1-s) &\text{\rm if }s\in\{n-1,\ldots,2n-3\}\text{ even}\\
                                                  {v_{P_{I_{-n+1+s}}}^{ G}}(K)(n-1-s) &\text{\rm if }s\in\{n-1,\ldots,2n-3\}\text{ odd}\\
                                                  \Ind_{P_{I_{n-1}}}^{ G}(-(n-1)) &\text{\rm if }s=2n-2.
                                                 \end{cases}
$$
Each filtration splits as it is well-known that there are no non-split extensions between modules of different Tate twist and therefore, 
$$\gr^{\bullet}(\HH_{\rm rig}^s(\CDrin\slash K))\cong \HH_{\rm rig}^s(\CDrin\slash K)=\HH_{\rm rig,c}^s(\CDrin\slash K).$$
\end{proof}

\begin{thm}\label{RigDrinFirstComp}
 The rigid cohomology with compact supports of $\Drin$ is given by 
 $$\HH_{\rm rig,c}^{\ast}(\Drin\slash K)=\bigoplus_{i=0}^n{v_{P_{I_i}}^{ G}}(K)(-i)[-n-i].$$
\end{thm}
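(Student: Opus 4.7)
The plan is to feed two ingredients into the long exact sequence in rigid cohomology with compact supports associated with the decomposition $\Drin\hookrightarrow\PR_k^n\hookleftarrow\CDrin$ (cf.\ Subsection \ref{RigProps}): first, Proposition \ref{RigCohCDrinComp} supplying $\HH_{\rm rig,c}^*(\CDrin\slash K)$, and second, the identity $\HH_{\rm rig,c}^*(\PR_k^n\slash K)=\bigoplus_{i=0}^{n}K(-i)[-2i]$ from (\ref{Brucke}) (which uses that $\PR_k^n$ is proper). The structural input that trivialises the bookkeeping is Frobenius-equivariance: all maps in the long exact sequence respect Tate twists and there are no non-split extensions between eigenspaces of distinct Tate twists, so the sequence splits weight by weight.

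Fix $i\in\{0,\ldots,n\}$ and consider the $(-i)$-isotypic piece. The $\PR_k^n$-contribution is $K(-i)$ concentrated in degree $2i$. From Proposition \ref{RigCohCDrinComp} the $\CDrin$-contribution consists of a trivial $K(-i)$-summand in degree $2i$ (for $0\le i\le n-2$), a generalized Steinberg $v_{P_{I_i}}^{G}(K)(-i)$ in degree $n+i-1$ (for $0\le i\le n-2$), the full induced module $\Ind_{P_{I_{n-1}}}^{G}(K)(-(n-1))$ in degree $2n-2$ in the borderline case $i=n-1$, and nothing at all for $i=n$ (where $\CDrin$ has no cohomology since $\dim\CDrin=n-1$).

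The crucial point is that the restriction map $\HH_{\rm rig,c}^{2i}(\PR_k^n\slash K)\to\HH_{\rm rig,c}^{2i}(\CDrin\slash K)$ hits the trivial summand non-trivially. To see this, factor it through the further restriction to $Y_{I_i}=\PR_k^i\subset\CDrin$: the composite $\HH_{\rm rig,c}^{2i}(\PR_k^n\slash K)\to\HH_{\rm rig,c}^{2i}(\PR_k^i\slash K)$ is the $i$-th cup power of the hyperplane class pull-back and is an isomorphism $K(-i)\xrightarrow{\sim}K(-i)$ by (\ref{Brucke}); hence the intermediate map is already injective, and (for $0\le i\le n-2$) lands isomorphically onto the $K(-i)$-direct summand. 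Splitting off this isomorphism, the long exact sequence in weight $-i$ collapses, and the connecting homomorphism identifies $v_{P_{I_i}}^{G}(K)(-i)\xrightarrow{\sim}\HH_{\rm rig,c}^{n+i}(\Drin\slash K)_{-i}$ while all other degrees in this weight vanish. For $i=n-1$ both contributions coexist in degree $2n-2$; by $G$-equivariance together with the non-triviality just established, the restriction must be the inclusion of the unique trivial sub-representation $K\hookrightarrow\Ind_{P_{I_{n-1}}}^{G}(K)$, whose cokernel is by definition (Subsection \ref{GenSteinRep}) exactly $v_{P_{I_{n-1}}}^{G}(K)$, so the connecting map yields $\HH_{\rm rig,c}^{2n-1}(\Drin\slash K)_{-(n-1)}\cong v_{P_{I_{n-1}}}^{G}(K)(-(n-1))$. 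Finally, for $i=n$ the sequence degenerates to $\HH_{\rm rig,c}^{2n}(\Drin\slash K)_{-n}\cong K(-n)$, and this matches $v_{P_{I_n}}^{G}(K)(-n)$ under the convention $P_{I_n}=G$, $v_{G}^{G}(K)=K$.

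The one non-formal step I expect to require genuine thought is the geometric identification of the restriction map with the inclusion of the trivial sub-representation when $i=n-1$; producing an explicit class on $\CDrin$ pulled back from the hyperplane class of $\PR_k^n$ and locating it inside $\Ind_{P_{I_{n-1}}}^{G}(K)$ requires a concrete choice. Once this is normalized, all remaining work is bookkeeping within the Tate-twist-decomposed long exact sequence, and yields the claimed identification $\HH_{\rm rig,c}^{\ast}(\Drin\slash K)=\bigoplus_{i=0}^{n}v_{P_{I_i}}^{G}(K)(-i)[-n-i]$.
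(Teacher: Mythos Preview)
Your argument is correct and follows the same overall plan as the paper --- feed Proposition~\ref{RigCohCDrinComp} and the cohomology of $\PR_k^n$ into the long exact sequence for compactly supported rigid cohomology --- but you resolve the ambiguities in that sequence differently. The paper first invokes affineness of $\Drin$ to obtain the vanishing $\HH_{\rm rig,c}^i(\Drin\slash K)=0$ for $i<n$, and then appeals to purity of $\HH_{\rm rig,c}^i(\Drin\slash K)$ (quoting \cite[Prop.~3.3.8]{DOR}) as a black box to force the remaining maps. You instead decompose the long exact sequence into Frobenius-weight pieces and pin down the restriction maps directly by factoring through the closed linear subspace $\PR_k^i\subset\CDrin$: since the composite $\HH_{\rm rig}^{2i}(\PR_k^n)\to\HH_{\rm rig}^{2i}(\PR_k^i)$ is an isomorphism, the intermediate restriction is injective, and for $i=n-1$ uniqueness of the trivial sub of $\Ind_{P_{I_{n-1}}}^{G}(K)$ identifies it. This is more elementary in that it avoids the external purity input (and recovers the low-degree vanishing as a by-product of the weight bookkeeping); the paper's route is shorter once one is willing to cite purity. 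Your closing worry about the $i=n-1$ case is already handled by the argument you gave: the factorization through $\PR_k^{n-1}$ shows non-triviality, and $G$-equivariance plus Frobenius reciprocity forces the image to be the (one-dimensional) space of constants.
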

\begin{proof}
Using the respective property from Subsection \ref{RigProps}, it follows from the fact that $\Drin$ is a smooth affine variety of dimension $n$ that $\HH_{\rm rig,c}^i(\Drin\slash K)=0$ for all $i\notin\{n,\ldots,2n\}.$ Now employ the long exact sequence for rigid cohomology with compact supports for the pair of inclusions $$\Drin\stackrel{\text{open}}{\hookrightarrow}\PR_k^n\stackrel{\text{closed}}{\hookleftarrow}\CDrin.$$ 
For each $i\in\{n,\ldots,2n\},$ there are thus exact sequences of $ G$-modules 
\begin{equation*}
  \HH_{\rm rig}^{i-1}(\PR_k^n\slash K) \rightarrow \HH_{\rm rig}^{i-1}(\CDrin\slash K) \rightarrow \HH_{\rm rig,c}^{i}(\Drin\slash K) \rightarrow \HH_{\rm rig}^{i}(\PR_k^n\slash K).\nonumber
\end{equation*}
Inductive evaluation of those exact sequences (plugging in the result from Proposition \ref{RigCohCDrinComp} and of course the fact that rigid cohomology of $\PR_k^n$ is known (see Subsection \ref{RigProps})) finishes the proof. Here, one has to use the additional fact that $\HH_{\rm rig,c}^i(\Drin\slash K)$ is pure which means that it cannot contain submodules of different Tate twist, cf.\ e.g.\ \cite[Prop. 3.3.8]{DOR} (which only uses the fact that -- as is rigid cohomology -- $\ell$-adic cohomology is a Weil cohomology). 
\end{proof}

\section{Rigid Cohomology of \texorpdfstring{$\Drin$}{X} Computed from the Associated De Rham Complex}\label{SecThreeFour}

The goal of this section is to 
compute the rigid cohomology  $\HH_{\rm rig}^{\ast}(\Drin\slash K)$ of $\Drin$ 
from its associated de Rham complex. The main tool will again be an adapted version of Orlik's complex.\\

First of all, a cofinal family of strict open neighborhoods (with respect to the reverse inclusion ordering) of $]\Drin[$ in $\PR_K^{n,\rm rig}$ suitable for the purpose of adapting Orlik's complex has to be constructed. Let $X\subset \PR_k^n$ be an open subset and write $Z=\PR_k^n\setminus X$ for its closed complement. For $\lambda\in(0,1)\cap \lvert\overline{K}^{\times}\rvert$, let $$V^{\lambda}=\PR_K^{n,\rm rig}\setminus]Z[_{\lambda}.$$ 
Here, $]Z[_{\lambda}$ is the open tube of $Z$ of radius $\lambda$ in $\PR_K^{n,\rm rig}$ which can be described as follows, cf.\ \cite[2.3]{LeStumRigCoh}: Suppose that the vanishing ideal of $Z$ is generated by the homogeneous polynomials $f_1,\ldots,f_r\in k[T_0,\ldots,T_n].$ For each $l\in\{1,\ldots,r\},$ let $\tilde f_l\in\VCal[T_0,\ldots,T_n]$ be a (homogeneous) lift of $f_l.$ Then \label{IntReff}
$$]Z[_{\lambda}=\left\{x\in\PR_K^{n,\rm rig}\text{ (unimodular)}\text{ }\Big |\text{ } \forall l\in\{0,\ldots,r\}: \lvert \tilde f_l(x)\rvert< \lambda\right\}.$$
According to \cite[3.3.1]{LeStumRigCoh}, $V^{\lambda}$ is a strict open neighborhood of $]X[$ in $\PR_K^{n,\rm rig}.$ Moreover, the system $(V^{\lambda})_{\lambda \in (0,1)}$ is even a cofinal system of quasi-compact strict open neighborhoods of $]X[$ in $\PR_K^{n,\rm rig},$ cf.\ \cite[3.3.3]{LeStumRigCoh}. For $m\in\N,$ let 
\begin{eqnarray*}
 \lambda_m &=& \lvert\pi\rvert^{1\slash (m+1)}\in\lvert\overline K^\times\rvert.
\end{eqnarray*}
Then the countable system $(V^{\lambda_m})_{m\in\N}$ is cofinal in $(V^{\lambda})_{\lambda\in(0,1)}$ and thus it is itself a cofinal system of strict open neighborhoods of $]X[$ in $\PR_K^{n,\rm rig}.$\label{StrOpNeig}

Now specialize to the case $X=\Drin.$ A slight technical problem in adapting Orlik's complex is the fact that the ``operation'' $]-[_{\lambda}$ does not commute with taking finite unions. Therefore, there will now be constructed a cofinal system $(U^m)_{m\in\N}$ of strict open neighborhoods of $]\Drin[$ in $\PR_K^{n,\rm rig}$ better suited for the task at hand.
 
For $m\in\N,$ set 
$$U^m = \PR_K^{n,\rm rig}\setminus\bigcup_{I\subsetneq\Delta}\bigcup_{g\in G\slash P_I}]g.Y_I[_{\lambda_m}.$$

\begin{lemma}\label{LostLemma}
\hspace{2em}
 \begin{enumerate}[i)]
  \item The set $U^m$ is a strict open neighborhood of $]\Drin[$ in $\PR_K^{n,\rm rig}.$
  \item The set $U^m$ is an affinoid subvariety of $\PR_K^{n,\rm rig}.$
  \item The family $\left\{U^m\mid m\in\N\right\}$ is cofinal in the family of all strict open neighborhoods of $]\Drin[$ in $\PR_K^{n,\rm rig}.$
 \end{enumerate}
\end{lemma}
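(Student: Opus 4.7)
My plan hinges on the observation that the union defining the complement $\PR_K^{n,\rm rig}\setminus U^m$ reduces considerably: each $g.Y_I$ is a $k$-rational linear subspace contained in some $k$-rational hyperplane $H$, and tubes respect such inclusions, so $]g.Y_I[_{\lambda_m}\subset\,]H[_{\lambda_m}$; conversely, every $k$-rational hyperplane already appears as some $g.Y_I$ (for $I=\{\alpha_0,\ldots,\alpha_{n-2}\}$ and $g\in G/P_I$). Thus
\[\bigcup_{I\subsetneq\Delta,\,g\in G/P_I}]g.Y_I[_{\lambda_m}\;=\;\bigcup_{H}]H[_{\lambda_m},\]
where $H$ ranges over the finite set of $k$-rational hyperplanes. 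I fix integral linear lifts $\tilde f_H\in\VCal[T_0,\ldots,T_n]$; note that $|\tilde f_H(x)|\leq\max_j|x_j|$ on any unimodular representative, so $|\tilde f_H|^{\rm unimod}\leq 1$ throughout.

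For (i), any $x\in\,]\Drin[$ satisfies $\spec^{\rm rig}(x)\in\Drin$ and hence lies on no $k$-rational hyperplane; therefore $x\notin\,]H[\supset\,]H[_{\lambda_m}$ for every $H$, proving $]\Drin[\subset U^m$. For the strictness, the identity $\CDrin=V_+(\prod_H\tilde f_H)$ together with multiplicativity $\prod_H|\tilde f_H|^{\rm unimod}=|\prod_H\tilde f_H|^{\rm unimod}$ and $|\tilde f_H|^{\rm unimod}\leq 1$ shows $\bigcup_H]H[_{\lambda_m}\subset\,]\CDrin[_{\lambda_m}$. Hence $V^{\lambda_m}\subset U^m$, so admissibility of the standard cover $(V^{\lambda_m},]\CDrin[)$ (cf.\ \cite[3.3.1]{LeStumRigCoh}) transfers to $(U^m,]\CDrin[)$.

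For (ii), the crucial point is that $V_+(T_i)$ is a $k$-rational hyperplane for each $i\in\{0,\ldots,n\}$, so for $x\in U^m$ the inequality $|T_i(x)|^{\rm unimod}=|x_i|/\max_j|x_j|\geq\lambda_m$ forces $|x_j/x_i|\leq 1/\lambda_m$ for all $j$. This confines
\[U^m\;\subset\;D_0^{1/\lambda_m}:=\bigl\{[x_0:\cdots:x_n]\bigm||x_j/x_0|\leq 1/\lambda_m\text{ for all }j\bigr\},\]
which is an affinoid polydisk. In the coordinates $y_j=x_j/x_0$ the condition $|\tilde f_H(x)|^{\rm unimod}\geq\lambda_m$ becomes $|\tilde f_H(1,y)|\geq\lambda_m\max(1,|y_1|,\ldots,|y_n|)$, equivalently the finitely many inequalities
\[\bigl|\lambda_m/\tilde f_H(1,y)\bigr|\leq 1,\quad\bigl|\lambda_m y_j/\tilde f_H(1,y)\bigr|\leq 1\quad(j=1,\ldots,n,\,H\text{ arbitrary}).\]
Since $\lambda_m\in K^\times$, the numerators and the common denominator generate the unit ideal in the Tate algebra of $D_0^{1/\lambda_m}$, so this exhibits $U^m$ as a rational subdomain of $D_0^{1/\lambda_m}$, hence as an affinoid.

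For (iii), given a strict neighborhood $V\supset\,]\Drin[$, I first pick $m'$ with $V^{\lambda_{m'}}\subset V$ by cofinality of $(V^{\lambda_m})_m$. Writing $N$ for the number of $k$-rational hyperplanes, the elementary bound $(\min_H|\tilde f_H|^{\rm unimod})^N\leq\prod_H|\tilde f_H|^{\rm unimod}=|\prod_H\tilde f_H|^{\rm unimod}$ shows that $x\in\,]\CDrin[_{\lambda_{m'}}$ implies $\min_H|\tilde f_H(x)|^{\rm unimod}<\lambda_{m'}^{1/N}$. Choosing $m\geq N(m'+1)-1$ makes $\lambda_{m'}^{1/N}\leq\lambda_m$, whence $]\CDrin[_{\lambda_{m'}}\subset\bigcup_H]H[_{\lambda_m}$ and thus $U^m\subset V^{\lambda_{m'}}\subset V$. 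The main obstacle throughout is (ii): the affinoid property rests on exploiting the coordinate hyperplanes $V_+(T_i)$ to trap $U^m$ inside a bounded polydisk, thereby converting the a priori projective defining conditions into rational subdomain conditions on an affinoid.
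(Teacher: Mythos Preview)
Your arguments for (i) and (iii) follow the same lines as the paper's: for (i) you show $V^{\lambda_m}\subset U^m$ and invoke that $V^{\lambda_m}$ is already a strict neighborhood; for (iii) you prove by hand the inclusion $]\CDrin[_{\lambda_{m'}}\subset\bigcup_H]H[_{\lambda_m}$ via $(\min_H\lvert\tilde f_H\rvert)^N\leq\prod_H\lvert\tilde f_H\rvert$, which the paper obtains instead by citing \cite[2.3.6]{LeStumRigCoh}.

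For (ii) there is a slip that needs repair. You assert ``$\lambda_m\in K^\times$'', but $\lambda_m=\lvert\pi\rvert^{1/(m+1)}$ is a real number and, since the value group of $K$ is $\lvert\pi\rvert^{\Z}$, there is no element of $K$ of absolute value $\lambda_m$ for $m\geq 1$. Your rational-subdomain description, with ``numerators'' $\lambda_m$ and $\lambda_m y_j$, is therefore not well-posed over $K$ as written. The fix is routine: raise everything to the $(m+1)$-th power, replacing $\lvert\tilde f_H(1,y)\rvert\geq\lambda_m$ by $\lvert\pi\rvert\leq\lvert\tilde f_H(1,y)^{m+1}\rvert$ and $\lvert\lambda_m y_j\rvert\leq\lvert\tilde f_H(1,y)\rvert$ by $\lvert\pi y_j^{m+1}\rvert\leq\lvert\tilde f_H(1,y)^{m+1}\rvert$; now $\pi\in K^\times$ genuinely forces the unit-ideal condition. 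The same device is tacitly needed to see that your ambient polydisk $D_0^{1/\lambda_m}$ is $K$-affinoid, namely as $\{y:\lvert\pi y_j^{m+1}\rvert\leq 1\text{ for all }j\}$.

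The paper's route for (ii) is different and sidesteps this issue. Having written $U^m=\bigcap_H(\PR_K^{n,\rm rig}\setminus]\PR(H)[_{\lambda_m})$, it shows each factor is affinoid by a linear coordinate change reducing to $H=V_+(T_n)$, where the complement of the tube is visibly the polydisk $\{z:\lvert\pi z_i^{m+1}\rvert\leq 1\}$, and then uses that a finite intersection of affinoid subvarieties of the separated space $\PR_K^{n,\rm rig}$ is again affinoid. Your approach instead confines $U^m$ to a single polydisk (via the coordinate hyperplanes $V_+(T_i)$) and then carves it out by rational conditions indexed by all $H$ at once; once repaired as above this is correct, but the paper's per-hyperplane decomposition is shorter and makes the $K$-rationality transparent.
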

\begin{proof}
 \begin{enumerate}[i)]
  \item Since 
  $$U^m=\bigcap_{I\subsetneq\Delta}\bigcap_{g\in G\slash P_I}\PR_K^{n,\rm rig}\setminus]g.Y_I[_{\lambda_m}$$
  is a finite intersection of admissible open subsets of $\PR_K^{n,\rm rig},$ it is admissible open in $\PR_K^{n,\rm rig}$ itself. Directly from the definition of tubes, it follows that $\bigcup_{I\subsetneq\Delta}\bigcup_{g\in G\slash P_I}]g.Y_I[_{\lambda_m}$ is contained in $]\CDrin[_{\lambda_m}.$ Therefore, $V^{\lambda_m}=\PR_K^{n,\rm rig}\setminus]\CDrin[_{\lambda_m}$ is contained in $U^m$ and the claim follows from \cite[3.1.2]{LeStumRigCoh}.
  \item Denote by $\HCal$ the set of all $n$-dimensional $k$-subspaces of $k^{n+1}.$ Then there is an identification 
  \begin{eqnarray*}
   U^m &=& \PR_K^{n,\rm rig}\setminus\bigcup_{H\in\HCal}]\PR(H)[_{\lambda_m}\\ &=& \bigcap_{H\in\HCal}\PR_K^{n,\rm rig}\setminus]\PR(H)[_{\lambda_m}.
  \end{eqnarray*}
  Since a finite intersection of affinoid subvarieties of $\PR_K^{n,\rm rig}$ is again an affinoid subvariety (this is due to the fact that $\PR_K^{n,\rm rig}$ is separated, cf.\ \cite[4.10.1 and 4.3.4]{FrPu}), it is enough to show that each $\PR_K^{n,\rm rig}\setminus]\PR(H)[_{\lambda_m}$ is an affinoid subvariety of $\PR_K^{n,\rm rig}.$ Possibly after a coordinate transformation, one can reduce to the case that $H=V_+(T_n).$ Then
  \begin{eqnarray*}
   \PR_K^{n,\rm rig}\setminus]\PR(H)[_{\lambda_m} &=& \left\{x=[x_0:\ldots:x_n]\in\PR_K^{n,\rm rig}\text{ unimodular}\text{ }\big|\text{ } \lvert x_n\rvert\geq \lambda_m\right\}\\
   &\cong& \left\{\left(\frac{x_0}{x_n},\ldots,\frac{x_{n-1}}{x_n}\right)\in (\widehat{\overline K})^n\text{ }\Big|\text{ } \forall i=0,\ldots,n-1:\lvert \frac{x_i}{x_n}\rvert \leq\lambda_m^{-1}\right\}\\
   &=& \left\{(z_0,\ldots,z_{n-1})\in (\widehat{\overline K})^n\text{ }\big|\text{ } \forall i=0,\ldots,n-1:\lvert \pi z_i^{m+1} \rvert \leq 1\right\}
  \end{eqnarray*}
  and the last set is an affinoid $K$-variety.
  \item The family $(V^{\lambda})_{\lambda\in(0,1)\cap \lvert\overline K^{\times}\rvert}$ is a cofinal family of strict neighborhoods. Therefore, it is enough to show that for each $\lambda$ as above, there exists some $m\in\N$ such that $U^m\subset V^{\lambda}.$ By \cite[2.3.6]{LeStumRigCoh}, there is an admissible covering 
  $$]\CDrin[_{\lambda}\subset \bigcup_{I\subsetneq \Delta}\bigcup_{g\in G\slash P_I}]g.Y_I[_{\lambda'}$$
  for $\lambda'$ such that $$\prod_{I\subsetneq\Delta}\prod_{g\in(\G\slash\Parbf_I)(k)}\lambda'=(\lambda')^{\left(\sum_{I\subsetneq\Delta}\sum_{g\in G\slash P_I}1\right)}=\lambda.$$
  Choose $m\in\N$ large enough so that $\lambda'\leq \lambda_m.$ Then there is an admissible covering
  $$]\CDrin[_{\lambda}\subset \bigcup_{I\subsetneq \Delta}\bigcup_{g\in G\slash P_I}]g.Y_I[_{\lambda_m},$$
  hence
  $$V^{\lambda}=\PR_K^{n,\rm rig}\setminus]\CDrin[_{\lambda}\supset \PR_K^{n,\rm rig}\setminus \bigcup_{I\subsetneq \Delta}\bigcup_{g\in G\slash P_I}]g.Y_I[_{\lambda_m}=U^m$$
  which finishes the proof of the lemma.
 \end{enumerate}
\end{proof}

Denote by 
$$j_m:U^m\hookrightarrow\PR_K^{n,\rm rig}$$
the inclusion. Then the above lemma implies that the rigid cohomology of $\Drin$ with values in an overconvergent $F$-isocrystal $\ECal$ (defined on $\PR_K^{n,\rm rig}$) can be computed as
$$\HH_{\rm rig}^{\ast}(\Drin\slash K,\ECal)=\Hyp^{\ast}(\PR_K^{n,\rm rig},\dlim_{m\in\N}{j_m}_*j_m^*(\ECal\otimes_{\Om_{\PR_K^{n,\rm rig}}}\Omega_{\PR_K^{n,\rm rig}}^{\bullet})).$$ 

From now on, for $i=0,\ldots, n,$ set 
$$\ECal^i=\ECal\otimes_{\Om_{\PR_K^{n,\rm rig}}}\Omega_{\PR_K^{n,\rm rig}}^i.$$
To make use of the associated de Rham complex for the computation of the above hypercohomology, one has to calculate the cohomology spaces $\HH^{\ast}(\PR_K^{n,\rm rig},\dlim_{m\in\N}{j_m}_*j_m^*\ECal^i)$ for each $i\in\N_0.$
\begin{lemma}\label{DERHAMWORKS}
 For each $i=0,\ldots,n,$ there is an identification
 $$\HH^{\ast}(\PR_K^{n,\rm rig},\dlim_{m\in\N}{j_m}_*j_m^*\ECal^i)=\dlim_{m\in\N}\HH^0(U^m,\ECal^i)[0].$$
\end{lemma}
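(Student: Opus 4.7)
The approach rests on two ingredients: the affinoidness of each $U^m$ (Lemma \ref{LostLemma}(ii)) combined with Kiehl's acyclicity theorem, and the quasi-compactness of $\PR_K^{n,\rm rig}$. The general strategy is to compute the cohomology on the right-hand side for each fixed $m$ via the Leray spectral sequence of the open immersion $j_m$, and only then pass to the direct limit.

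First I would exchange the direct limit with the cohomology functor. Since $\PR_K^{n,\rm rig}$ is quasi-compact and quasi-separated --- or, equivalently, $\PR_K^{n,\sad}$ is a qcqs spectral space --- filtered direct limits of sheaves of abelian groups commute with sheaf cohomology:
\[
\HH^p\bigl(\PR_K^{n,\rm rig},\dlim_{m}(j_m)_*j_m^*\ECal^i\bigr)
=\dlim_{m}\HH^p\bigl(\PR_K^{n,\rm rig},(j_m)_*j_m^*\ECal^i\bigr).
\]
Passing to the adic topos via the equivalence recalled in Subsection \ref{RigVarAdSp} makes this a standard fact about sheaves on qcqs topological spaces.

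Next, for each fixed $m$ I would apply the Leray spectral sequence
\[
E_2^{p,q}=\HH^p\bigl(\PR_K^{n,\rm rig},R^qj_{m,*}(j_m^*\ECal^i)\bigr)
\Longrightarrow\HH^{p+q}(U^m,\ECal^i).
\]
The higher direct image $R^qj_{m,*}(j_m^*\ECal^i)$ is the sheaf associated with $V\mapsto\HH^q(V\cap U^m,\ECal^i)$, and to prove it vanishes for $q>0$ it is enough to check this on affinoid subdomains $V$ of $\PR_K^{n,\rm rig}$. For such a $V$, both $V$ and $U^m$ are affinoid subdomains of the separated rigid space $\PR_K^{n,\rm rig}$, so $V\cap U^m$ is again affinoid. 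Since $\ECal^i$ is coherent on $U^m$ (as the tensor product of a coherent overconvergent isocrystal with the coherent sheaf of differentials), Kiehl's theorem then yields $\HH^q(V\cap U^m,\ECal^i)=0$ for $q>0$. Hence $R^qj_{m,*}(j_m^*\ECal^i)=0$ for $q>0$, the Leray spectral sequence collapses, and
\[
\HH^p\bigl(\PR_K^{n,\rm rig},(j_m)_*j_m^*\ECal^i\bigr)=\HH^p(U^m,\ECal^i).
\]
A second application of Kiehl, this time to the affinoid $U^m$ directly, kills the right-hand side for $p>0$ and leaves only $\HH^0(U^m,\ECal^i)$ in degree zero. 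Taking the filtered direct limit in $m$ then gives the claim.

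The main technical point is the justification of the commutation of cohomology with the filtered direct limit over $m$; on the rigid Grothendieck site this would require some care about the admissible topology, whereas on the adic space $\PR_K^{n,\sad}$ it reduces to a classical statement about sheaves on a qcqs topological space. All other steps are formal consequences of the affinoidness of $U^m$, the separation of $\PR_K^{n,\rm rig}$, and Kiehl's theorem applied to coherent sheaves.
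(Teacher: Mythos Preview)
Your proof is correct and follows essentially the same route as the paper: first commute the filtered direct limit with cohomology by passing to the adic space (the paper cites \cite[2.3.13]{HuberBook}, which is precisely the qcqs statement you invoke), then use vanishing of the higher direct images of $j_m$ together with Kiehl's Theorem~B on the affinoid $U^m$. The only cosmetic difference is that the paper packages the vanishing of $R^qj_{m,*}$ by saying that $j_m$ is a \emph{quasi-Stein morphism} in the sense of \cite[p.~20]{LeStumRigCoh}, whereas you argue directly that $V\cap U^m$ is affinoid for affinoid $V$; the content is the same.
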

\begin{proof}
 First of all, applying \cite[2.3.13]{HuberBook} to the parallel situation of adic spaces yields isomorphisms
 $$\HH^{\ast}(\PR_K^{n,\rm rig},\dlim_{m\in\N}{j_m}_*j_m^*\ECal^i)=\dlim_{m\in\N}\HH^{\ast}(\PR_K^{n,\rm rig},{j_m}_*j_m^*\ECal^i).$$
 The morphism $j_m$ is quasi-Stein in the sense of \cite[p. 20]{LeStumRigCoh} which in particular implies that the higher direct images ${\bf R}^l {j_m}_*(j_m^*\ECal^i)$ vanish for $l\geq 1$, cf.\ loc.\ cit. From this, it follows that 
 $$\HH^{\ast}(\PR_K^{n,\rm rig},{j_m}_*j_m^*\ECal^i)=\HH^{\ast}(U^m,j_m^*\ECal^i )=\HH^0(U^m,j_m^*\ECal^i )=\HH^0(U^m,\ECal^i)$$
 since higher coherent cohomology on affinoid spaces vanishes by Kiehl's Theorem $B,$ cf.\ \cite[2.4]{Kiehl}. This finishes the proof. 
\end{proof}

So in essence, to compute $\HH_{\rm rig}^{\ast}(\Drin\slash K,\ECal),$ one has to compute $\HH^0 (U^m,\ECal^i)$ for all $i,$ apply the direct limit $\dlim_{m\in\N},$ plug the resulting spaces into a spectral sequence and describe the associated gradings.  This will be done in the next few subsections using the methods of \cite{OrlikEVB,OrlikDeRham} by Orlik. The strategy to determine the spaces $\HH^0(U^m,\ECal^i)$ is the same as in the previous chapter in the case of finite ground fields, namely to use local cohomology, but this time of rigid analytic spaces. For this purpose, this cohomology theory shall be recalled briefly (cf.\ \cite[1.2-3]{vdPut}):\\
Let $X$ be a rigid analytic $K$-space, let $Z\subset X$ be an admissible open subset such that $U=X\setminus Z$ is also admissible open in $X.$ For an abelian sheaf $\HCal$ on $X,$ set
$$\HH_Z^0(X,\HCal)=\ker(\HH^0(X,\HCal)\rightarrow\HH^0(U,\HCal)).$$
Then $\HH_Z^0(X,-)$ is a left exact functor and therefore it has right derived functors
$$\HH_Z^i(X,-)={\rm \bf R}^i\HH_Z^0(X,-).$$
The following hold and will be used freely in the sequel:
\begin{itemize}
 \item There is a long exact sequence 
 $$\ldots\rightarrow\HH^i(X,\HCal)\rightarrow\HH^i(U,\HCal)\rightarrow\HH_Z^{i+1}(X,\HCal)\rightarrow\HH^{i+1}(X,\HCal)\rightarrow\ldots,$$
 cf.\ \cite[1.3]{vdPut}, which also holds in the more general situation presented here.
 \item From the fact that the functor $(-)^{\sad}$ induces an equivalence of topoi combined with the fact that $\HH_Z^{\ast}(X,\HCal)$ is computed by using an injective resolution of $\HCal,$ it follows that there is an isomorphism
 \begin{equation}\label{AdRigComp}\HH_{X^{\sad}\setminus U^{\sad}}^{\ast}(X^{\sad},\HCal^{\sad})\cong \HH_{Z}^{\ast}(X,\HCal)
 \end{equation}
 where the local cohomology for adic spaces is defined as usual for topological spaces.
\end{itemize}.

Now set 
$$Y^m=\PR_K^{n,\rm rig}\setminus U^m=\bigcup_{I\subsetneq\Delta}\bigcup_{g\in G\slash P_I}]g.Y_I[_{\lambda_m}.$$ 
Then there are local cohomology groups $\HH_{Y^m}^{\ast}(\PR_K^{n,\rm rig},\ECal^{i})$ and an exact sequence
$$(0)\rightarrow \HH^{0}(\PR_K^{n,\rm rig},\ECal^{i}) \rightarrow \HH^0(U^{m},\ECal^{i})\rightarrow \HH_{Y^m}^1(\PR_K^{n,\rm rig},\ECal^{i}).$$
The groups $\HH_{Y^m}^1(\PR_K^{n,\rm rig},\ECal^{i})$ can be computed using an adapted version of Orlik's complex and one can then determine $\HH^0(U^{m},\ECal^{i}).$

Because of technical reasons concerning the localization of sheaves in points, it is again more convenient to use the framework of adic spaces.

\subsection{Adaption of Orlik's Complex}\label{OneMoreTime}

For $m\in\N,$ set
$$Z^m=\PR_K^{n,\sad}\setminus U^{m,\sad}$$
and for $I\subsetneq\Delta$ and $g\in G\slash P_I,$ set 
$$Z_{g,I}^m=\PR_K^{n,\sad}\setminus\left(\PR_K^{n,\rm rig}\setminus ]g.Y_I[_{\lambda_m}\right)^{\sad}.$$
Both $Z^m$ and $Z_{g,I}^m$ are closed subspaces in $\PR_K^{n,\sad}.$ An inclusion $I\subset J$ of proper subsets of $\Delta$ together with a mapping $g P_I\mapsto h P_J$ under the canonical map $$ G\slash P_I\rightarrow G\slash P_J$$ 
induces a closed embedding of closed subspaces
$$\gamma_{I,J}^{g,h}:Z_{g,I}^m\hookrightarrow Z_{h,J}^m$$
of $Z^m.$ Furthermore, for each $I$ and $g$ as above, there are closed embeddings
$$\delta_{g,I}:Z_{g,I}^m\hookrightarrow Z^m,$$
so that for all $g,h$ and $I,J$ as above, there are commutative triangles

\begin{diagram}
           &                              & Z^m && \\
           & \ruInto^{\delta_{g,I}}      &  & \luInto^{\delta_{h,J}} & \\
 Z_{g,I}^m &              & \rInto^{\gamma_{I,J}^{g,h}} &  & Z_{h,J}^m\\
           &
\end{diagram}

\noindent of closed embeddings. Let $\FCal$ be a sheaf of abelian groups on $Z^m.$ For $I\subsetneq\Delta$ and $g\in G\slash P_I,$ set 
\begin{eqnarray*}
 \FCal_{g,I} &=& {\delta_{g,I}}_*\delta_{g,I}^{-1}\FCal,\nonumber\\
 \FCal_I &=&  \bigoplus_{g\in G\slash P_I}\FCal_{g,I}.\nonumber
\end{eqnarray*}

\begin{prop}\label{ACDC}
 For any sheaf $\FCal$ of abelian groups on $Z^m,$ there is an acyclic complex
 \begin{equation}\label{GSeq}
  0\rightarrow\FCal\rightarrow \bigoplus_{I\subsetneq\Delta\atop \#I=n-1}\FCal_I\rightarrow \bigoplus_{I\subsetneq\Delta\atop \#I=n-2}\FCal_I\rightarrow\ldots\rightarrow \bigoplus_{I\subsetneq\Delta\atop \#I=1}\FCal_I\rightarrow\FCal_{\emptyset}\rightarrow 0
 \end{equation}
 of sheaves of abelian groups on $Z^m.$
\end{prop}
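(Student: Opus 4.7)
The plan is to mimic the proof of Proposition \ref{FundaKompAd} essentially verbatim, replacing the role of $(\spec^{\sad})^{-1}(\PR(U))$ by the closed subspace $Z_U^m:=\PR_K^{n,\sad}\setminus(\PR_K^{n,\rm rig}\setminus ]\PR(U)[_{\lambda_m})^{\sad}$ of $\PR_K^{n,\sad}$ associated with the tube of $\PR(U)$ of radius $\lambda_m$. Since sheaves on adic spaces are in particular sheaves on topological spaces, it suffices to check that (\ref{GSeq}) is acyclic after localization at each point $x\in Z^m$. Under the identification $\coprod_{I\subsetneq\Delta,\#I=n-1}G/P_I\cong\{U\subsetneq k^{n+1}\mid U\neq (0)\}$ used in loc.\ cit., the localized complex is then identified with the chain complex, with values in $\FCal_x$, of a simplicial complex $X^\bullet$ built on $X=\{U\subsetneq k^{n+1}, U\neq (0) : x\in Z_U^m\}$.

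Three properties of $Z_U^m$ substitute for the obvious properties of $(\spec^{\sad})^{-1}(\PR(U))$ used in the original argument:
\begin{enumerate}[(i)]
 \item $U\subset U'$ implies $Z_U^m\subset Z_{U'}^m$, which follows from the inclusion $]\PR(U)[_{\lambda_m}\subset ]\PR(U')[_{\lambda_m}$ (any defining equation of $\PR(U')$ is also a defining equation of $\PR(U)$) by passing to complements and adifications.
 \item $Z_U^m\cap Z_{U'}^m=Z_{U\cap U'}^m$. Using that adification commutes with unions of admissible opens via the equivalence of topoi recalled in Section \ref{RigVarAdSp}, this reduces to the identity $]\PR(U)[_{\lambda_m}\cap ]\PR(U')[_{\lambda_m}=]\PR(U\cap U')[_{\lambda_m}$; the latter follows from the description of tubes by inequalities on defining equations, upon taking defining linear forms of $\PR(U\cap U')$ to be the union of those of $\PR(U)$ and $\PR(U')$.
 \item If $U=(0)$, then $\PR(U)=\emptyset$ and $]\emptyset[_{\lambda_m}=\emptyset$, since $\lambda_m<1$ and $|1|<\lambda_m$ has no solutions.
\end{enumerate}

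Once these are established, Quillen's criterion applies as in loc.\ cit. The set $X$ is nonempty since $x\in Z^m=\bigcup_{I,g}Z_{g,I}^m$. Choose $U_0\in X$ of minimal dimension; for any $U\in X$, property (ii) gives $x\in Z_{U\cap U_0}^m$, then (iii) forces $U\cap U_0\neq (0)$, so $U\cap U_0\in X$, and minimality of $\dim U_0$ yields $U_0\subset U$. Hence $\id:X\to X$ satisfies $U_0,U\leq\id(U)$ for every $U\in X$, and \cite[1.5]{Quillen} yields acyclicity of the simplicial chain complex with values in $\FCal_x$. The main technical point will be property (ii)---cleanly matching defining equations when taking intersections and verifying that adification interacts correctly with complements and unions; everything else is a routine transcription of the argument of Proposition \ref{FundaKompAd}.
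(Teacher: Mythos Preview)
Your proposal is correct and follows essentially the same route as the paper's proof: localize at $x\in Z^m$, identify the localized complex with the chain complex of the order complex on $X=\{U\mid x\in Z_U^m\}$, and apply Quillen's criterion via a minimal $U_0$. The one place where the paper is more precise than your sketch is the justification of (ii): the equivalence of topoi does not by itself yield $(A\cup B)^{\sad}=A^{\sad}\cup B^{\sad}$ for admissible opens $A,B$; rather, the paper observes that each $\PR_K^{n,\rm rig}\setminus]\PR(U)[_{\lambda_m}$ is a finite union of affinoids (via the standard affinoid cover of $\PR_K^{n,\rm rig}$), so the two-piece cover of $\PR_K^{n,\rm rig}\setminus]\PR(U_0\cap U)[_{\lambda_m}$ is admissible, and then invokes \cite[1.1.11(c)]{HuberBook} to pass to adic spaces.
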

\begin{proof}
 (cf.\ the proof of Proposition \ref{FundaKompAd}) The proof is again by localization of the above complex with respect to a point $x\in Z^m.$ Consider the set
 $$X=\left\{U\subsetneq k^{n+1}\text{ }k\text{-subspace}\text{ }\big|\text{ } U\neq (0),x\in \PR_K^{n,\sad}\setminus (\PR_K^{n,\rm rig}\setminus ]\PR(U)[_{\lambda_m})^{\sad}\right\}.$$
 This set is not empty since, by construction, 
 $$Z^m=\bigcup_{(0)\subsetneq U\subsetneq k^{n+1} }\PR_K^{n,\sad}\setminus(\PR_K^{n,\rm rig}\setminus ]\PR(U)[_{\lambda_m})^{\sad}$$
 with $U$ running through all proper non-zero subspaces of $k^{n+1}.$ Choose a minimal subspace $U_0\in X$ and let $U\in X$ be arbitrary. Directly from the definition of tubes of radius $\lambda$ (see page \pageref{IntReff}), one observes that the identity
 $$]\PR(U_0)[_{\lambda_m}\cap ]\PR(U)[_{\lambda_m}=]\PR(U_0)\cap \PR(U)[_{\lambda_m}$$ 
 holds. Both $\PR_K^{n,\rm rig}\setminus]\PR(U_0)[_{\lambda_m}$ and $\PR_K^{n,\rm rig}\setminus]\PR(U)[_{\lambda_m}$ are finite unions of affinoid spaces which is seen by using the standard affinoid covering of $\PR_K^{n,\rm rig}.$ Therefore, the covering 
 $$\PR_K^{n,\rm rig}\setminus]\PR(U_0)\cap \PR(U)[_{\lambda_m} =\PR_K^{n,\rm rig}\setminus]\PR(U_0)[_{\lambda_m}\cup\text{ } \PR_K^{n,\rm rig}\setminus]\PR(U)[_{\lambda_m}$$ 
 has a refinement consisting of finitely many affinoid subsets and is thus admissible. By \cite[1.1.11 (c)]{HuberBook}, this implies that
 $$\left(\PR_K^{n,\rm rig}\setminus]\PR(U_0)\cap \PR(U)[_{\lambda_m}\right)^{\sad} =\left(\PR_K^{n,\rm rig}\setminus]\PR(U_0)[_{\lambda_m}\right)^{\sad}\cup\left(\PR_K^{n,\rm rig}\setminus]\PR(U)[_{\lambda_m}\right)^{\sad},$$
 thus
 \begin{eqnarray*}
  x &\in& \left(\PR_K^{n,\sad}\setminus \left(\PR_K^{n,\rm rig}\setminus ]\PR(U_0)[_{\lambda_m}\right)^{\sad}\right)\cap \left(\PR_K^{n,\sad}\setminus \left(\PR_K^{n,\rm rig}\setminus ]\PR(U)[_{\lambda_m}\right)^{\sad}\right)\\
  &=& \PR_K^{n,\sad}\setminus \left( \left(\PR_K^{n,\rm rig}\setminus ]\PR(U_0)[_{\lambda_m}\right)^{\sad}\cup \left(\PR_K^{n,\rm rig}\setminus ]\PR(U)[_{\lambda_m}\right)^{\sad}\right)\\
  &=& \PR_K^{n,\sad}\setminus \left( \PR_K^{n,\rm rig}\setminus ]\PR(U_0\cap U)[_{\lambda_m}\right)^{\sad}.
 \end{eqnarray*}
 It follows that $U_0\cap U$ cannot be equal to $(0)$ and, by minimality of $U_0,$ this means that $U_0\cap U=U_0.$ Therefore, the identity map $\id:X\rightarrow X$ fulfills 
 $$U_0,U\subset \id(U)$$
 for all $U\in X$ and, again by Quillen's criterion, the simplicial complex $X^{\bullet}$ associated with $X$ is contractible (cf.\ the construction in the proof of Proposition \ref{FundaKompAd}). This implies that the chain complex 
 \begin{equation*}
  0\rightarrow\FCal_{x}\rightarrow \bigoplus_{\substack {I\subsetneq\Delta\\ \#I=n-1}}\bigoplus_{\substack{g\in G\slash P_I\\ x\in Z_{g,I}^m}}\FCal_{x}\rightarrow \ldots\rightarrow \bigoplus_{\substack {I\subsetneq\Delta\\ \#I=1}}\bigoplus_{\substack{g\in G\slash P_I\\ x\in Z_{g,I}^m}} \FCal_{x}\rightarrow\bigoplus_{\substack{g\in G\slash P_\emptyset\\ x\in Z_{g,\emptyset}^m}}\FCal_{x}\rightarrow 0
 \end{equation*}
 associated with $X^{\bullet}$ with values in $X$ is acyclic. Since this complex is precisely the localization of (\ref{GSeq}) with respect to $x,$ the proposition is proved.
\end{proof}

\subsection{Construction of a Spectral Sequence}\label{ThreeFourOne}

Let $m\in\N$ and consider the case that $\FCal=\Z_{Z^m}$ is the constant sheaf on $Z^m$ with value $\Z.$ Let 
$$\iota_m:Z^m\hookrightarrow\PR_K^{n,\sad}$$ 
be the inclusion which is in particular a closed embedding. For $r=0,\ldots,n-1,$ set 
$$\FCal_r=\bigoplus_{I\subsetneq \Delta\atop \# I=n-1-r}\FCal_I.$$
There is then a (second quadrant) spectral sequence
\begin{equation*}
 E_1^{r,s}=\Ext^s({\iota_m}_*\FCal_{-r},\ECal^{i,\sad})\Longrightarrow \Ext^{r+s}({\iota_m}_*\FCal,\ECal^{i,\sad})=\HH_{Z^m}^{r+s}(\PR_K^{n,\sad},\ECal^{i,\sad}).
\end{equation*}
This spectral sequence is evaluated in the next subsection. Recall that
$$Y^m=\bigcup_{I\subsetneq\Delta}\bigcup_{g\in G\slash P_I}]g.Y_I[_{\lambda_m}.$$
It follows from (\ref{AdRigComp}) that
$$\HH_{Z^m}^{r+s}(\PR_K^{n,\sad},\ECal^{i,\sad})=\HH_{Y^m}^{r+s}(\PR_K^{n,\rm rig},\ECal^{i}).$$

\subsection{Evaluation of the Spectral Sequence}\label{ThreeFourTwo}

\subsubsection{The \texorpdfstring{$E_1$}{E1}-Page}

\begin{lemma}\label{EONELEMMA} Let $r,s\in\Z.$ Then there is an identification
$$E_1^{r,s}=\bigoplus_{I\subsetneq\Delta\atop\lvert I\rvert=n-1+r}\Ind_{\Parbf_I(\VCal)(1)}^{\G(\VCal)}\HH_{]Y_I[_{\lambda_m}}^s(\PR_K^{n,\rm rig},\ECal^{i}).$$
\end{lemma}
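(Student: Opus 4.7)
My plan is to identify each summand $\Ext^{s}({\iota_m}_*\FCal_{-r},\ECal^{i,\sad})$ with a direct sum of rigid local cohomology groups with support in the tubes $]g.Y_I[_{\lambda_m}$, and then regroup along $\G(\VCal)$-orbits to obtain the induced representation.

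First, I would unwind the definitions. By construction,
$$\FCal_{-r} \;=\; \bigoplus_{\substack{I\subsetneq\Delta\\ \#I=n-1+r}}\bigoplus_{g\in G/P_I}(\delta_{g,I})_*\delta_{g,I}^{-1}\Z_{Z^m},$$
and the composition $\iota_m\circ\delta_{g,I}\colon Z_{g,I}^m\hookrightarrow\PR_K^{n,\sad}$ is a closed embedding. Hence ${\iota_m}_*(\delta_{g,I})_*\delta_{g,I}^{-1}\Z_{Z^m}$ is just the extension by zero of the constant sheaf $\Z$ from the closed subspace $Z_{g,I}^m$ of $\PR_K^{n,\sad}$. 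Since $\Ext$ turns the finite direct sum in the first argument into a direct sum of Ext-groups, I reduce to computing $\Ext^{s}$ of this single extension-by-zero sheaf.

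Next, I would invoke the standard identification of Ext from an extension-by-zero constant sheaf on a closed subspace with local cohomology: for a closed subspace $W\hookrightarrow X$ of an adic (or topological) space and any abelian sheaf $\HCal$ on $X$,
$$\Ext^{s}_{X}((W\hookrightarrow X)_*\Z_W,\HCal) \;=\; \HH^{s}_{W}(X,\HCal).$$
Applied to $W=Z_{g,I}^m$ and $X=\PR_K^{n,\sad}$, this yields $\HH^{s}_{Z_{g,I}^m}(\PR_K^{n,\sad},\ECal^{i,\sad})$. Now, by the definition of $Z_{g,I}^m$, the open complement in $\PR_K^{n,\sad}$ is the adification of $\PR_K^{n,\rm rig}\setminus\,]g.Y_I[_{\lambda_m}$, which is itself admissibly open in $\PR_K^{n,\rm rig}$; in particular the pair $(]g.Y_I[_{\lambda_m},\PR_K^{n,\rm rig}\setminus\,]g.Y_I[_{\lambda_m})$ satisfies the hypothesis for rigid local cohomology recalled on page~\pageref{AdRigComp}. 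The isomorphism (\ref{AdRigComp}) then gives
$$\HH^{s}_{Z_{g,I}^m}(\PR_K^{n,\sad},\ECal^{i,\sad})\;\cong\;\HH^{s}_{]g.Y_I[_{\lambda_m}}(\PR_K^{n,\rm rig},\ECal^{i}).$$

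Finally, I would package the inner direct sum over $g\in G/P_I$ as an induced representation exactly as in the proof of Lemma \ref{FirstRigE1Lem}. Using the commutative square relating the $\G(\VCal)$-action on $\PR_K^{n,\rm rig}$ to the $G$-action on $\PR_k^{n}$ via $\spec^{\rm rig}$, one checks $g.\,]Y_I[_{\lambda_m}\,=\,]\bar g.Y_I[_{\lambda_m}$, so $\Parbf_I(\VCal)(1)$ stabilises $]Y_I[_{\lambda_m}$ and functoriality of local cohomology turns the transitive $\G(\VCal)/\Parbf_I(\VCal)(1)\cong G/P_I$ action on the summands into the standard model of an induced representation. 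Combined with the previous steps this produces the claimed formula.

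The main obstacle I anticipate is Step 2/3, namely certifying cleanly that the Ext in the adic topos matches local cohomology and that the identification (\ref{AdRigComp}) applies with $Z=\,]g.Y_I[_{\lambda_m}$; but both reduce to verifying that $\PR_K^{n,\rm rig}\setminus\,]g.Y_I[_{\lambda_m}$ is admissible open (Lemma \ref{LostLemma}(ii) argument) and to the general fact that for a closed subspace of a locally ringed/adic space, $\Ext^s$ from the extension-by-zero of $\Z$ computes the derived functors of sections supported on that closed subspace, both of which are standard.
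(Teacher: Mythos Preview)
Your proposal is correct and follows essentially the same route as the paper's own proof: decompose $\FCal_{-r}$ into its summands, use the identification $\Ext^s((\iota_m\circ\delta_{g,I})_*\Z_{Z_{g,I}^m},\ECal^{i,\sad})=\HH_{Z_{g,I}^m}^s(\PR_K^{n,\sad},\ECal^{i,\sad})$, apply (\ref{AdRigComp}) to pass to rigid local cohomology with support in $]g.Y_I[_{\lambda_m}$, and then invoke the $g.\,]Y_I[_{\lambda_m}=\,]\bar g.Y_I[_{\lambda_m}$ argument from Lemma~\ref{FirstRigE1Lem} to reorganize the inner sum as an induced representation. Your anticipated obstacles are exactly the points the paper treats as standard, and your justifications for them are adequate.
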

\begin{proof}
For brevity, write $\Z=\Z_{]\CDrin[_{\lambda_m}^{\rm ad}}.$ Then
 \begin{eqnarray}
  E_1^{r,s}&=&\Ext^s({\iota_m}_*\FCal_{-r},\ECal^{i,\sad})\nonumber\\ 
	&=& \bigoplus_{I\subsetneq \Delta\atop \lvert I\rvert=n-1+r}\bigoplus_{g\in G\slash P_I}\Ext^{s}({\iota_m}_*(\delta_{g,I})_*(\delta_{g,I})^{-1}\Z,\ECal^{i,\sad})\nonumber\\
	&=& \bigoplus_{I\subsetneq \Delta\atop \lvert I\rvert=n-1+r}\bigoplus_{g\in G\slash P_I}\Ext^{s}((\iota_m\circ\delta_{g,I})_*\Z_{\mid Z_{g,I}^m},\ECal^{i,\sad})\nonumber\\
	&=& \bigoplus_{I\subsetneq \Delta\atop \lvert I\rvert=n-1+r}\bigoplus_{g\in G\slash P_I} \HH_{Z_{g,I}^m}^{s}(\PR_K^{n,\rm ad},\ECal^{i,\sad})\nonumber\\
	&\cong&\label{Zwischen} \bigoplus_{I\subsetneq \Delta\atop \lvert I\rvert=n-1+r}\bigoplus_{g\in G\slash P_I} \HH_{]g.Y_I[_{\lambda_m}}^{s}(\PR_K^{n,\rm rig},\ECal^{i})\\
	&=&\nonumber \bigoplus_{I\subsetneq \Delta\atop \lvert I\rvert=n-1+r}\bigoplus_{g\in \G(\VCal)\slash \Parbf_I(\VCal)(1)} \HH_{g.]Y_I[_{\lambda_m}}^{s}(\PR_K^{n,\rm rig},\ECal^{i})\\
	&=& \bigoplus_{I\subsetneq\Delta\atop\lvert I\rvert=n-1+r}\Ind_{\Parbf_I(\VCal)(1)}^{\G(\VCal)}\HH_{]Y_I[_{\lambda_m}}^s(\PR_K^{n,\rm rig},\ECal^{i}),\nonumber
 \end{eqnarray}
where the isomorphism (\ref{Zwischen}) is an application of (\ref{AdRigComp}). Here, the fact that $g.]Y_I[_{\lambda_m}=]\bar g.Y_I[_{\lambda_m}$ for all $g\in\G(\VCal)$ is used to establish the action of the group $\G(\VCal)$ on $\bigoplus_{g\in G\slash P_I}\HH_{]g.Y_I[_{\lambda_m}}^s(\PR_K^{n,\rm rig},\ECal^{i}).$ For this and also for the definition of the subgroup $\Parbf_I(\VCal)(1)\subset\G(\VCal),$ cf.\ the proof of Lemma \ref{FirstRigE1Lem}. Note that the action of $\G(\VCal)$ on $\mathbb{B}_K^{n+1}=\{x=(x_0,\ldots,x_n)\in\widehat{\overline K}^{n+1}\mid\forall i=0,\ldots,n:\lvert x_i\vert\leq 1\}$ given by $(g,x)\mapsto xg^{-1}$ preserves the unimodular points of $\mathbb{B}_K^{n+1}.$
\end{proof}

By definition, $Y_I=\PR_k^{i_1}$ for $\Delta\setminus I=\{\alpha_{i_1},\ldots,\alpha_{i_t}\}$ with $i_1<\ldots<i_t$ so that 
$$\HH_{]Y_I[_{\lambda_m}}^s(\PR_K^{n,\rm rig},\ECal^{i})=\HH_{]\PR_k^{i_1}[_{\lambda_m}}^s(\PR_K^{n,\rm rig},\ECal^{i}).$$ 
In \cite[1.3]{OrlikEVB} it is shown that $\HH_{]\PR_k^{i_1}[_{\lambda_m}}^s(\PR_K^{n,\rm rig},\ECal^{i})$ carries the structure of a $K$-Banach space in such a way that the inclusion 
$$\HH_{\PR_K^{i_1,\rm rig}}^s(\PR_K^{n,\rm rig},\ECal^{i})\subset\HH_{]\PR_k^{i_1}[_{\lambda_m}}^s(\PR_K^{n,\rm rig},\ECal^{i})$$
of the algebraic local cohomology space $\HH_{\PR_K^{i_1,\rm rig}}^s(\PR_K^{n,\rm rig},\ECal^{i})$ has dense image. Thus, for $s\neq n-i_1$, the description of the local cohomology modules amounts to
\begin{equation}\label{TubeDense}
\HH_{]\PR_k^{i_1}[_{\lambda_m}}^s(\PR_K^{n,\rm rig},\ECal^{i})=\begin{cases} 0 &\text{\rm if } s<n-i_1\\ \HH^s(\PR_K^{n,\rm rig},\ECal^{i}) &\text{\rm if } s>n-i_1,
\end{cases}
\end{equation}
cf.\ the reasoning in \cite[1.2]{OrlikEVB}.
Write 
$$M_{I}^s=\begin{cases}
\HH_{]\PR_k^{n-s}[_{\lambda_m}^{\rm rig}}^s(\PR_K^{n,\rm rig},\ECal^{i}) &\text{\rm if }\alpha_{n-s}\notin I\\
\HH^s(\PR_K^{n,\rm rig},\ECal^{i}) &\text{\rm if }\alpha_{n-s}\in I
\end{cases}$$
for $I\subsetneq\Delta,s\in\{2,\ldots,n\}.$
Then, each row $E_{1}^{\bullet,s}$ with $s\in\{1,\ldots,n\}$ of the $E_1$-page of the spectral sequence has the following shape:
For $s\in\{2,\ldots,n\},$ one gets
\begin{eqnarray*}
E_1^{\bullet,s}:&& E_1^{-(s-1),s}=\Ind_{\Parbf_{\{\alpha_0,\ldots,\alpha_{n-1-s}\}}(\VCal)(1)}^{\G(\VCal)}\HH_{]\PR_k^{n-s}[_{\lambda_m}}^{s}(\PR_K^{n,\rm rig},\ECal^{i}) \rightarrow\\
&& E_1^{-(s-2),s}=\bigoplus_{I\subsetneq\Delta\atop{\# I=n-s+1\atop{\alpha_0,\ldots,\alpha_{n-1-s}\in I}}}\Ind_{\Parbf_I(\VCal)(1)}^{\G(\VCal)}M_I^s \rightarrow\ldots \rightarrow E_1^{0,s}=\bigoplus_{I\subsetneq\Delta\atop{\# I=n-1\atop{\alpha_0,\ldots,\alpha_{n-1-s}\in I}}}\Ind_{\Parbf_I(\VCal)(1)}^{\G(\VCal)(1)}M_I^s.
\end{eqnarray*}

For $s=1,$ one gets
\begin{eqnarray*}
E_1^{\bullet,1}:E_1^{0,1}=\Ind_{\Parbf_{\{\alpha_0,\ldots,\alpha_{n-2}\}}(\VCal)(1)}^{\G(\VCal)}\HH_{]\PR_k^{n-1}[_{\lambda_m}^{\rm rig}}^{1}(\PR_K^{n,\rm rig},\ECal^{i}).
\end{eqnarray*}

\subsubsection{The \texorpdfstring{$E_2$}{E2}-Page}

The evaluation of the $E_2$-page proceeds in complete analogy with \cite[2.2]{OrlikEVB}, the notable difference to the present case being the avoidance of duals. The main point -- at least in the present case -- is that for each $s\in\{2,\ldots,n\},$ the complex $E_1^{\bullet,s}$ is acyclic apart from the positions $-(s-1)$ and $0,$ cf.\ \cite[2.2.4]{OrlikEVB}. In order to avoid any (more) repetition, the proof of the following proposition is therefore omitted. The following notation is used: Let $s\in \{1,\ldots,n\}.$
\begin{itemize}
  \item The $\Parbf_{(s+1-j,s)}(\VCal)(1)$-module $\tilde\HH_{]\PR_K^s[_{\lambda_m}}^{n-s}(\PR_K^{n,\rm rig},\ECal^{i})$ is defined as
  $$\tilde\HH_{]\PR_K^s[_{\lambda_m}}^{n-s}(\PR_K^{n,\rm rig},\ECal^{i})=\ker\left(\HH_{]\PR_K^s[_{\lambda_m}}^{n-s}(\PR_K^{n,\rm rig},\ECal^{i})\xrightarrow{d}\HH^{n-s}(\PR_K^{n,\rm rig},\ECal^{i})\right),$$
  where $d$ is the ($\Parbf_{(n+1-s,s)}(\VCal)(1)$-equivariant) map appearing in the long exact sequence associated with local cohomology of rigid analytic varieties as defined in the beginning of this subsection.
  \item Denote by $\mathcal{K}_s$ the cokernel of the canonical map
  $$\bigoplus_{I\subsetneq \delta\atop {\#I=n+1-s\atop {\alpha_0,\ldots,\alpha_{n-s-1}\in I\atop \alpha_{n-s\not\in I}}}}\Ind_{\Parbf_{I}(\VCal)(1)}^{\Parbf_{(n+1-s,s)}(\VCal)(1)}(K)\rightarrow \Ind_{\Parbf_{(n+1-s,1^s)}(\VCal)(1)}^{\Parbf_{(n+1-s,s)}(\VCal)(1)}(K).$$
  \item Write $v_{\Parbf_{(n+1-s,1^s)}(\VCal)(1)}^{\G(\VCal)}(K)$ for the generalized Steinberg representation of $\G(\VCal)$ with respect to $\Parbf_{(n+1-s,1^s)}(\VCal)(1),$ i.e.\ set
  $$v_{\Parbf_{(n+1-s,1^s)}(\VCal)(1)}^{\G(\VCal)}(K)=\Ind_{\Parbf_{(n+1-s,1^s)}(\VCal)(1)}^{\G(\VCal)}(K)\slash\bigoplus_{\Parbf_{(n+1-s,1^s)}\subsetneq {\bf Q}\subset \G}\Ind_{{\bf Q}(\VCal)(1)}^{\G(\VCal)}(K).$$
 \end{itemize}

\begin{prop}\label{E2Analog}
The $E_2$-page of the above spectral sequence has the following description: 
\begin{enumerate}[i)]
 \item If $r=0$ and $s\in\{2,\ldots,n\},$ then
 $$E_2^{r,s}=\HH^s(\PR_K^{n,\rm rig},\ECal^i).$$
 \item If $s\in\{2,\ldots,n\}$ and $r=-(s-1),$ then there are short exact sequences of $ G$-modules
 \begin{eqnarray*}
  0&\rightarrow& \Ind_{\Parbf_{(n+1-s,s)}(\VCal)(1)}^{ \G(\VCal)}\left(\tilde\HH_{]\PR_k^{n-s}[_{\lambda_m}}^s(\PR_K^{n,\rm rig},\ECal^{i})\otimes\mathcal{K}_s\right)\rightarrow E_2^{-(s-1),s}\\
  &\rightarrow& v_{\Parbf_{(n+1-s,1^s)}(\VCal)(1)}^{\G(\VCal)}(K)'\otimes_K\HH^{s}(\PR_K^{n,\rm rig},\ECal^{i})\rightarrow 0
 \end{eqnarray*}
 \item In all other cases, $E_2^{r,s}=0.$
\end{enumerate}
\end{prop}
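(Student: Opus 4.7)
The plan is to follow the strategy of \cite[2.2]{OrlikEVB} as the author indicates, exploiting the structural dichotomy in the description of $M_I^s$. The row $E_1^{\bullet,s}$ is a complex of $\G(\VCal)$-modules whose terms at position $r$ are direct sums of induced modules $\Ind_{\Parbf_I(\VCal)(1)}^{\G(\VCal)} M_I^s$ indexed by $I\subsetneq\Delta$ of fixed cardinality containing $\{\alpha_0,\ldots,\alpha_{n-1-s}\}$, and $M_I^s$ takes one of two values depending on whether $\alpha_{n-s}\in I$ or not. The whole point is to split the complex according to this dichotomy and identify each piece with a Steinberg-type resolution.

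First I would split $E_1^{\bullet,s}$ into two subcomplexes: a ``tube'' subcomplex $T^{\bullet}$ assembled from the summands with $\alpha_{n-s}\notin I$ (so the local cohomology $\HH_{]\PR_k^{n-s}[_{\lambda_m}}^s(\PR_K^{n,\rm rig},\ECal^i)$ appears), and an ``ambient'' subcomplex $A^{\bullet}$ assembled from the summands with $\alpha_{n-s}\in I$ (so $\HH^s(\PR_K^{n,\rm rig},\ECal^i)$ appears). Using the map $d$ from the long exact local cohomology sequence recalled before the proposition, these two subcomplexes fit into a short exact sequence of complexes. The key observation, inherited from the definition of $\tilde\HH$, is that the kernel of $d$ contributes $\tilde\HH_{]\PR_k^{n-s}[_{\lambda_m}}^{n-s}(\PR_K^{n,\rm rig},\ECal^{i})$ in the tube part, and the image of $d$ merges the two pieces.

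Next I would compute the cohomology of each subcomplex separately. Both $T^{\bullet}$ and $A^{\bullet}$ have, after factoring out the common coefficient space ($\tilde\HH$ respectively $\HH^s$), the shape of the exact sequence \eqref{SteinRes} truncated appropriately, with inductions taken relative to $\G(\VCal)$ and $\Parbf_{(n+1-s,s)}(\VCal)(1)$. Invoking \eqref{SteinRes} (in the integral setting of $\G(\VCal)$ and the principal congruence subgroups $\Parbf_I(\VCal)(1)$, which behave exactly as their finite quotients), the ambient subcomplex $A^{\bullet}$ turns out to be acyclic except at position $r=0$, where it produces $\HH^s(\PR_K^{n,\rm rig},\ECal^i)$; this gives part (i) and accounts for the quotient $v_{\Parbf_{(n+1-s,1^s)}(\VCal)(1)}^{\G(\VCal)}(K)'\otimes\HH^s$ appearing in part (ii) via the connecting homomorphism. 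The tube subcomplex $T^{\bullet}$, again by \eqref{SteinRes}, is acyclic except at position $r=-(s-1)$, where it contributes $\Ind_{\Parbf_{(n+1-s,s)}(\VCal)(1)}^{\G(\VCal)}(\tilde\HH\otimes\mathcal{K}_s)$; the cokernel $\mathcal{K}_s$ is precisely what is left after the tube resolution has been cut off at the penultimate term. Passing to the long exact cohomology sequence of the short exact sequence of complexes then yields the short exact sequence in part (ii) and the vanishing in part (iii).

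The main obstacle will be bookkeeping: matching the functorial maps in the splitting of $E_1^{\bullet,s}$ with the defining map in $\mathcal{K}_s$ and with the connecting homomorphism so that the generalized Steinberg quotient $v_{\Parbf_{(n+1-s,1^s)}(\VCal)(1)}^{\G(\VCal)}(K)'$ appears on the nose (rather than merely up to some filtration). This is precisely the combinatorial point worked out in \cite[Sect.\ 2.2]{OrlikEVB}; the duals are absent here because the coefficient sheaf $\ECal^i$ is not predualized, but otherwise the argument transfers verbatim, so I would cite loc.\ cit.\ for these purely representation-theoretic manipulations rather than repeat them.
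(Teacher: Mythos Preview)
Your proposal is correct and matches the paper's approach: the paper itself omits the proof and simply refers to \cite[2.2]{OrlikEVB}, noting only that the complex $E_1^{\bullet,s}$ is acyclic away from positions $-(s-1)$ and $0$ and that duals are avoided. Your expanded sketch---splitting along the dichotomy in $M_I^s$ via the map $d$, identifying each piece with a truncated Steinberg-type resolution (\ref{SteinRes}), and reading off the two nonzero cohomology groups---is precisely the argument of loc.\ cit., so your deferral to that reference for the combinatorial bookkeeping is exactly what the paper does.
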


\subsubsection{Degeneration and the Resulting Filtration}

With the same arguments as in \cite[p. 633]{OrlikEVB}, the spectral sequence degenerates on its $E_2$-page and therefore, $E_2=E_{\infty}$ describes filtration steps of a (descending) filtration by $\G(\VCal)$-submodules on $\HH_{Y^m}^1(\PR_K^{n,\rm rig},\ECal^{i}).$ This filtration can be pulled back along the $\G(\VCal)$-morphism 
$$\HH^0(U^m,\ECal^{i})\rightarrow\HH_{Y^m}^1(\PR_K^{n,\rm rig},\ECal^{i}).$$
\begin{thm}\label{DescMod}
On each $\HH^0(U^m,\ECal^{i}),$ there exists a filtration 
$$\ECal^i(U^m)^{\bullet}=\left(\HH^0(U^m,\ECal^{i})=\ECal^i(U^m)^{0}\supset \ECal^i(U^{m})^{1}\supset\ldots\supset \ECal^i(U^{m})^{n-1}\supset \ECal^i(U^{m})^{n}=\HH^0(\PR_K^{n,\rm rig},\ECal^{i})\right)$$
by $\G(\VCal)$-submodules such that each filtration step appears in a short exact sequence 
\begin{eqnarray*} 
0&\rightarrow& \Ind_{\Parbf_{(j+1,n-j)}(\VCal)(1)}^{\G(\VCal)}(\tilde\HH_{]\PR_k^{j}[_{\lambda_m}}^{n-j}(\PR_K^{n,\rm rig},\ECal^{i})\otimes\mathcal{K}_{n-j}) \rightarrow  (\ECal^i(U^m)^{j}\slash\ECal^i(U^m)^{j+1})\\
&\rightarrow& v_{\Parbf_{(j+1,1^{n-j})}(\VCal)(1)}^{\G(\VCal)}(K)'\otimes_K\HH^{n-j}(\PR_K^{n,\rm rig},\ECal^{i}) \rightarrow 0
\end{eqnarray*}
for $j=0,\ldots, n-1.$ For $j=n,$ there is an identification
$$\ECal^i(U^m)^n=\HH^0(\PR_K^{n,\rm rig},\ECal^{i}).$$
These filtrations are compatible with $\G$-equivariant morphisms between the involved sheaves.
\end{thm}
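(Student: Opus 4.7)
The plan is to transfer the spectral sequence filtration on $\HH_{Y^m}^1(\PR_K^{n,\rm rig},\ECal^{i})$ from Subsection \ref{ThreeFourOne} to a filtration on $\HH^0(U^m,\ECal^{i})$ by means of the local cohomology long exact sequence recalled in the preamble to this section.

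Since the spectral sequence degenerates on its $E_2$-page, the abutment $\HH_{Y^m}^1(\PR_K^{n,\rm rig},\ECal^{i})$ carries a decreasing filtration $F^\bullet$ by $\G(\VCal)$-submodules, and the associated graded pieces are the non-trivial entries $E_\infty^{r,1-r}=E_2^{r,1-r}$. By Proposition \ref{E2Analog}, these contributions sit only on the antidiagonal $r=-(s-1)$ for $s\in\{1,\ldots,n\}$, and each $E_2^{-(s-1),s}$ fits into the short exact sequence displayed there. The substitution $j:=n-s$ transforms $\Parbf_{(n+1-s,s)}$ into $\Parbf_{(j+1,n-j)}$, $]\PR_k^{n-s}[_{\lambda_m}$ into $]\PR_k^{j}[_{\lambda_m}$ and $v_{\Parbf_{(n+1-s,1^s)}(\VCal)(1)}^{\G(\VCal)}(K)'$ into $v_{\Parbf_{(j+1,1^{n-j})}(\VCal)(1)}^{\G(\VCal)}(K)'$, so the short exact sequences already take the form asserted in the theorem for $j=0,\ldots,n-1$.

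Next, I would invoke the initial segment of the local cohomology long exact sequence
\begin{equation*}
0\rightarrow\HH^0(\PR_K^{n,\rm rig},\ECal^{i})\rightarrow\HH^0(U^m,\ECal^{i})\xrightarrow{\partial}\HH_{Y^m}^1(\PR_K^{n,\rm rig},\ECal^{i})\rightarrow\HH^1(\PR_K^{n,\rm rig},\ECal^{i}),
\end{equation*}
which is $\G(\VCal)$-equivariant by the naturality of local cohomology. I would then define $\ECal^i(U^m)^{j}$, for $j=0,\ldots,n-1$, to be the preimage under $\partial$ of the step of $F^\bullet$ whose graded piece is $E_2^{-(n-1-j),n-j}$, and set $\ECal^i(U^m)^{n}:=\ker(\partial)=\HH^0(\PR_K^{n,\rm rig},\ECal^{i})$. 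By construction this is a descending filtration by $\G(\VCal)$-submodules, and the graded pieces for $j\leq n-1$ coincide with those of the induced filtration on $\partial(\HH^0(U^m,\ECal^{i}))\subset\HH_{Y^m}^1(\PR_K^{n,\rm rig},\ECal^{i})$, which are precisely the stated short exact sequences.

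The $\G(\VCal)$-equivariance of the filtration, as well as its compatibility with $\G$-equivariant morphisms of isocrystals, is inherited from the functoriality of the spectral sequence (which is built from the acyclic complex of Proposition \ref{ACDC}), of the local cohomology long exact sequence and of the connecting map $\partial$. The main technical issue I anticipate is verifying that every step $F^r\HH_{Y^m}^1(\PR_K^{n,\rm rig},\ECal^{i})$ lies in the image of $\partial$, equivalently that the composition $F^r\hookrightarrow\HH_{Y^m}^1(\PR_K^{n,\rm rig},\ECal^{i})\rightarrow\HH^1(\PR_K^{n,\rm rig},\ECal^{i})$ is zero so that no filtration piece is lost. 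This proceeds along the same lines as the parallel argument in \cite{OrlikEVB} and \cite{OrlikDeRham}, and I expect it to transfer to the present rigid analytic setting without essential change.
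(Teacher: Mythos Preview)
Your proposal is correct and follows essentially the same route as the paper: the paper simply says the filtration on $\HH_{Y^m}^1(\PR_K^{n,\rm rig},\ECal^{i})$ coming from $E_2=E_\infty$ is ``pulled back along the $\G(\VCal)$-morphism $\HH^0(U^m,\ECal^{i})\rightarrow\HH_{Y^m}^1(\PR_K^{n,\rm rig},\ECal^{i})$'' and then defers the remaining details (including the compatibility statement) to \cite[Cor.\ 2.2.9]{OrlikEVB} and \cite[Lemma 4]{OrlikDeRham}. You have made this pullback explicit via the connecting map $\partial$ and have correctly isolated the one nontrivial point, namely that the relevant filtration steps lie in the image of $\partial$; the paper handles this in exactly the way you anticipate, by citing the analogous argument in \cite{OrlikEVB}.
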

\begin{proof}
The proof is in complete analogy with the one of the corresponding result \cite[Cor.\ 2.2.9]{OrlikEVB}, the notable exception again being avoidance of the use of duals. The compatibility assertion is proved in \cite[Lemma 4]{OrlikDeRham}.
\end{proof}

\subsection{Computation of the Rigid Cohomology Modules}\label{ThreeFourThree}

From Theorem \ref{DescMod}, one obtains a $\G(\VCal)$-filtration $\dlim_{m\in\N}\ECal^i(U^m)^{\bullet}$ of each 
$$\HH^0(\PR_K^{n,\rm rig},\dlim_{m\in\N}{j_{m}}_*j_{m}^*\ECal^i)=\dlim_{m\in\N}\HH^0(\PR_K^{n,\rm rig},{j_{m}}_*j_{m}^*\ECal^i)=\dlim_{m\in\N}\HH^0(U^m,\ECal^i),$$
$i\in\{0,\ldots,n\},$ (which is also compatible with morphisms between the involved sheaves) such that each filtration step appears in a short exact sequence
\begin{eqnarray*}
0&\rightarrow& \dlim_{m\in\N}\Ind_{\Parbf_{(j+1,n-j)}(\VCal)(1)}^{ \G(\VCal)}(\tilde\HH_{]\PR_k^{j}[_{\lambda_m}}^{n-j}(\PR_K^{n,\rm rig},\ECal^{i})\otimes\mathcal{K}_{n-j}) \rightarrow \dlim_{m\in\N}(\ECal^i(U^m)^{j}\slash\ECal^i(U^m)^{j+1})\\
&\rightarrow& v_{\Parbf_{(j+1,1^{n-j})}(\VCal)(1)}^{\G(\VCal)}(K)'\otimes_K\HH^{n-j}(\PR_K^{n,\rm rig},\ECal^{i}) \rightarrow 0
\end{eqnarray*}
for $j=0,\ldots, n-1.$ Here, one of course uses the fact that taking the direct limit $\dlim_{m\in\N}$ in this context preserves exactness and thus is also compatible with taking quotients. For $j=n,$ one gets
$$\dlim_{m\in\N}\ECal^i(U^m)^n=\dlim_{m\in\N}\HH^0(\PR_K^{n,\rm rig},\ECal^{i})=\HH^0(\PR_K^{n,\rm rig},\ECal^{i}).$$
These filtrations can now be used to compute the rigid cohomology modules of $\Drin$ as $ G$-modules. The methods used are those of \cite{OrlikDeRham} by Orlik.\\

First of all, compatibility with $ G$-morphisms (see Theorem \ref{DescMod}) gives complexes 
\begin{eqnarray*}
0&\rightarrow&\dlim_{m\in\N}\ECal^0(U^{m})^{j}\slash\ECal^0(U^{m})^{j+1}\rightarrow\dlim_{m\in\N}\ECal^1(U^{m})^{j}\slash\ECal^1(U^{m})^{j+1}\rightarrow\ldots\\
&\rightarrow&\dlim_{m\in\N}\ECal^{n}(U^{m})^{j}\slash\ECal^{n}(U^{m})^{j+1}\rightarrow 0
\end{eqnarray*}
for $j=0,\ldots,n-1$ (induced by the complex $\ECal^{\bullet}$) the totality of which can be considered as the $E_0$-page of the spectral sequence induced by the filtered complex
$$0\rightarrow\dlim_{m\in\N}\ECal^0(U^{m})\rightarrow\dlim_{m\in\N}\ECal^1(U^{m})\rightarrow\ldots\rightarrow\dlim_{m\in\N}\ECal^r(U^{m})\rightarrow 0$$
computing $\HH_{\rm rig}^{\ast}(\Drin\slash K,\ECal),$
i.e.\
\begin{equation}\label{LastSpecSeq}
E_0^{r,s}=\dlim_{m\in\N}\ECal^{r+s}(U^{m})^r\slash \dlim_{m\in\N}\ECal^{r+s}(U^{m})^{r+1}\Longrightarrow_r\HH_{\rm rig}^{r+s}(\Drin\slash K,\ECal). 
\end{equation}
Depending on some cohomological information about $\ECal,$ one might now be able to compute this spectral sequence and thus $\HH_{\rm rig}^{\ast}(\Drin\slash K,\ECal)$ explicitly. As an illustration, consider again 
$$\ECal=\Om_{\PR_K^{n,\rm rig}},$$ 
i.e.\ 
$$\ECal^i=\Omega_{\PR_K^{n,\rm rig}\slash K}^i,$$ 
which has the following property:
\begin{lemma}\label{KEYLEMMA} The complex 
 \begin{eqnarray*}
	0&\rightarrow& \dlim_{m\in\N}\tilde\HH_{]\PR_k^{j}[_{ 	\lambda_m}}^{n-j}(\PR_K^{n,\rm rig},\Omega_{\PR_K^{n,\rm rig}\slash K}^0)\rightarrow \dlim_{m\in\N}\tilde\HH_{]\PR_k^{j}[_{\lambda_m}}^{n-j}(\PR_K^{n,\rm rig},\Omega_{\PR_K^{n,\rm rig}\slash K}^1)\rightarrow\ldots\\
	&\rightarrow&\dlim_{m\in\N}\tilde\HH_{]\PR_k^{j}[_{\lambda_m}}^{n-j}(\PR_K^{n,\rm rig},\Omega_{\PR_K^{n,\rm rig}\slash K}^n)\rightarrow 0
	\end{eqnarray*}
 is acyclic for all $j=0,\ldots,n-1.$ 
\end{lemma}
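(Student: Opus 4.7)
The plan is to reduce the acyclicity of the claimed complex to a quasi-isomorphism statement via the long exact sequence of local cohomology, and then compute both sides by means of the Hodge-to-de Rham spectral sequence of rigid cohomology. Write $W_m := \PR_K^{n,\rm rig} \setminus\, ]\PR_k^j[_{\lambda_m}$. The long exact sequence for local cohomology attached to the closed subset $]\PR_k^j[_{\lambda_m} \subset \PR_K^{n,\rm rig}$, combined with the vanishing $\HH^s_{]\PR_k^j[_{\lambda_m}}(\PR_K^{n,\rm rig}, \Omega^i_{\PR_K^{n,\rm rig}/K}) = 0$ for $s < n-j$ from (\ref{TubeDense}) (cf.\ \cite[1.3]{OrlikEVB}) together with the definition of $\tilde\HH^{n-j}$, extracts short exact sequences
\begin{equation*}
0 \to \HH^{n-j-1}(\PR_K^{n,\rm rig}, \Omega^i) \to \HH^{n-j-1}(W_m, \Omega^i) \to \tilde\HH^{n-j}_{]\PR_k^j[_{\lambda_m}}(\PR_K^{n,\rm rig}, \Omega^i) \to 0
\end{equation*}
which are natural in $i$ with respect to the de Rham differential. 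Since $\dlim_m$ is exact and the first term is independent of $m$, these assemble into a short exact sequence of complexes $0 \to A^\bullet \to B^\bullet \to C^\bullet \to 0$, where $C^\bullet$ is the complex of the lemma. The associated long exact sequence in cohomology reduces the claim to showing that $A^\bullet \hookrightarrow B^\bullet$ is a quasi-isomorphism.

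For $A^\bullet$, the smooth-projective Hodge-to-de Rham spectral sequence for $\PR_K^n$ degenerates at $E_1$, so all differentials in $A^\bullet$ vanish, and the Hodge numbers of projective space give $\HH^p(A^\bullet) = K$ for $p = n-j-1$ and $0$ otherwise. For $B^\bullet$, the family $(W_m)_m$ is a cofinal system of strict open neighborhoods of $]\PR_k^n \setminus \PR_k^j[$, so $\dlim_m \Hyp^*(W_m, \Omega^\bullet)$ equals $\HH^*_{\rm rig}((\PR_k^n \setminus \PR_k^j)/K)$. Since $\PR_k^n \setminus \PR_k^j \to \PR_k^{n-j-1}$ is a Zariski-locally trivial $\A^{j+1}$-bundle, $\A^1$-invariance of rigid cohomology gives $\HH^*_{\rm rig}((\PR_k^n \setminus \PR_k^j)/K) = \bigoplus_{i=0}^{n-j-1} K(-i)[-2i]$. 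Granted degeneration of the induced limit Hodge-to-de Rham spectral sequence $\dlim_m \HH^q(W_m, \Omega^p) \Rightarrow \HH^{p+q}_{\rm rig}((\PR_k^n \setminus \PR_k^j)/K)$, the pure Hodge--Tate shape of the target (each summand $K(-i)$ sits in bidegree $(i,i)$) forces $\HH^p(B^\bullet) = K$ at $p = n-j-1$ and $0$ elsewhere; moreover, in this single nonzero degree the map $A^\bullet \to B^\bullet$ is identified with the restriction $\HH^{2(n-j-1)}_{\rm rig}(\PR_k^n/K) \to \HH^{2(n-j-1)}_{\rm rig}((\PR_k^n \setminus \PR_k^j)/K)$ on the middle Hodge piece, which is an isomorphism by the Gysin sequence, since the boundary $\HH^*_{\PR_k^j}(\PR_k^n/K)$ vanishes in the relevant degrees by purity in codimension $n-j$.

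The hard part will be the degeneration of the limit Hodge-to-de Rham spectral sequence, which is not automatic in the rigid-analytic setting for non-proper varieties. Following the approach of \cite{OrlikDeRham}, this degeneration can be verified directly by an explicit \v{C}ech computation on a standard affinoid cover of $W_m$: locally around $\PR_k^j$, the complement splits as a product of a closed polydisk in $j$ coordinates and a polyannular region in the remaining $n-j$ coordinates, whose de Rham complex decomposes K\"unneth-style into explicitly computable Laurent-type pieces on which the de Rham differential acts diagonally by multiplication by non-zero integers (the one-variable prototype being $d(t^k)/dt = k\,t^{k-1}$ for $k\neq 0$). This yields the vanishing of the higher differentials and the identification $E_2 = E_\infty$ used above. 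Combined with the quasi-isomorphism of the previous paragraph, this gives $C^\bullet \simeq 0$, as required.
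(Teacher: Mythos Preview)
Your argument follows the paper's proof closely: both extract the short exact sequence of complexes from the local-cohomology long exact sequence and the vanishing in (\ref{TubeDense}), identify the cohomology of the first complex as $K[-(n-j-1)]$ via Hodge--de Rham for $\PR_K^n$, and reduce the computation of $h^*(B^\bullet)$ to the Hodge-to-de Rham spectral sequence $E_1^{r,s}=\dlim_m\HH^s(W_m,\Omega^r)\Rightarrow\HH_{\rm rig}^*((\PR_k^n\setminus\PR_k^j)/K)$ with known abutment. The only differences are cosmetic: the paper computes the abutment from the long exact sequence in rigid cohomology with supports (rather than via an affine-bundle argument) and, instead of invoking degeneration and outlining an explicit \v{C}ech/K\"unneth verification, uses (\ref{TubeDense}) together with the local-cohomology sequence once more to determine $E_1^{r,s}$ for all $s\neq n-j-1$, so that comparison with the abutment forces (\ref{Lilly}) directly.
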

\begin{proof}(cf.\ the proof of \cite[Prop.\ 5]{OrlikDeRham})
Fix $j\in\{0,\ldots,n-1\}.$ First of all, by construction, there are isomorphisms
$$\tilde\HH_{]\PR_k^{j}[_{\lambda_m}}^{n-j}(\PR_K^{n,\rm rig},\Omega_{\PR_K^{n,\rm rig}\slash K}^i)\cong\coker\left(\HH^{n-j-1}(\PR_K^{n,\rm rig},\Omega_{\PR_K^{n,\rm rig}\slash K}^i)\rightarrow\HH^{n-j-1}(\PR_K^{n,\rm rig}\setminus]\PR_k^{j}[_{\lambda_m},\Omega_{\PR_K^{n,\rm rig}\slash K}^i)\right)$$
for all $i=0,\ldots,n.$ As $n-j-1<n-j,$ it follows from (\ref{TubeDense}) that 
$$\HH_{]\PR_k^{j}[_{\lambda_m}}^{n-j-1}(\PR_K^{n,\rm rig},\Omega_{\PR_K^{n,\rm rig}\slash K}^i)=0$$
for all $i=0,\ldots,n.$ Therefore, for each $i\in\{0,\ldots,n\},$ using the long exact sequence from local cohomology, one obtains a short exact sequence 
\begin{eqnarray*}
0 &\rightarrow& \dlim_{m\in\N}\HH^{n-j-1}(\PR_K^{n,\rm rig},\Omega_{\PR_K^{n,\rm rig}\slash K}^i)=\HH^{n-j-1}(\PR_K^{n,\rm rig},\Omega_{\PR_K^{n,\rm rig}\slash K}^i)\\ 
&\rightarrow& \dlim_{m\in\N}\HH^{n-j-1}(\PR_K^{n,\rm rig}\setminus]\PR_k^{j}[_{\lambda_m},\Omega_{\PR_K^{n,\rm rig}\slash K}^i) \rightarrow \dlim_{m\in\N}\tilde\HH_{]\PR_k^{j}[_{\lambda_m}}^{n-j}(\PR_K^{n,\rm rig},\Omega_{\PR_K^{n,\rm rig}\slash K}^i)\rightarrow 0
\end{eqnarray*}
-- again by exactness of $\dlim_{m\in\N}$ -- which then gives rise to a short exact sequence of complexes
\begin{eqnarray*}
0 &\rightarrow& \left(\HH^{n-j-1}(\PR_K^{n,\rm rig},\Omega_{\PR_K^{n,\rm rig}\slash K}^i)\right)_{i=0,\ldots,n}\rightarrow \left(\dlim_{m\in\N}\HH^{n-j-1}(\PR_K^{n,\rm rig}\setminus]\PR_k^{j}[_{\lambda_m},\Omega_{\PR_K^{n,\rm rig}\slash K}^i)\right)_{i=0,\ldots,n}\\ 
&\rightarrow& \left(\dlim_{m\in\N}\tilde\HH_{]\PR_k^{j}[_{\lambda_m}}^{n-j}(\PR_K^{n,\rm rig},\Omega_{\PR_K^{n,\rm rig}\slash K}^i)\right)_{i=0,\ldots,n}\rightarrow 0.
\end{eqnarray*}
Application of the Snake Lemma then yields a long exact sequence of cohomology objects
\begin{eqnarray*}
\ldots&\rightarrow& h^{l-1}\left(\dlim_{m\in\N}\tilde\HH_{]\PR_k^{j}[_{\lambda_m}}^{n-j}(\PR_K^{n,\rm rig},\Omega_{\PR_K^{n,\rm rig}\slash K}^i)\right)_{i=0,\ldots,n} \rightarrow h^l\left(\HH^{n-j-1}(\PR_K^{n,\rm rig},\Omega_{\PR_K^{n,\rm rig}\slash K}^i)\right)_{i=0,\ldots,n}\\
&\rightarrow& h^l\left(\dlim_{m\in\N} \HH^{n-j-1}(\PR_K^{n,\rm rig}\setminus]\PR_k^{j}[_{\lambda_m},\Omega_{\PR_K^{n,\rm rig}\slash K}^i)\right)_{i=0,\ldots,n}\\
&\rightarrow& h^l\left(\dlim_{m\in\N}\tilde\HH_{]\PR_k^{j}[_{\lambda_m}}^{n-j}(\PR_K^{n,\rm rig},\Omega_{\PR_K^{n,\rm rig}\slash K}^i)\right)_{i=0,\ldots,n} \rightarrow\ldots \text{ }\text{ }.
\end{eqnarray*}
Now, because of the fact that 
\begin{equation}\label{Maja}
 h^{\ast}\left(\HH^{n-j-1}(\PR_K^{n,\rm rig},\Omega_{\PR_K^{n,\rm rig}\slash K}^i)\right)_{i=0,\ldots,n} = K[-(n-j-1)]
\end{equation}
by rigid GAGA, cf.\ \cite[4.10.5]{FrPu}, it is sufficient to show that 
\begin{equation}\label{Lilly}
 h^{\ast}\left(\dlim_{m\in\N}\HH^{n-j-1}(\PR_K^{n,\rm rig}\setminus]\PR_k^{j}[_{\lambda_m},\Omega_{\PR_K^{n,\rm rig}\slash K}^i)\right)_{i=0,\ldots,n} = K[-(n-j-1)] 
\end{equation}
to prove the lemma. This will be done by computing the rigid cohomology $\HH_{\rm rig}^{\ast}((\PR_k^n\setminus\PR_k^j)\slash K)$ from a system of strict open neighborhoods of $]\PR_k^n\setminus\PR_k^j[_P$ in $\PR_K^{n,\rm rig}$ and then comparing with the formula which can be obtained from (\ref{Brucke}):\\

By definition, 
$$\PR_k^n\setminus\PR_k^j=\bigcup_{l=j+1}^nD_+(T_l),$$
with closed complement $\PR_k^j=\bigcap_{l=j+1}^nV_+(T_l)$ in $\PR_k^n.$ For each $m\in\N,$ set
$$W^{m}=\PR_K^{n,\rm rig}\setminus ]\PR_k^j[_{\lambda_m}$$
and denote by
$$f_m:W^m\hookrightarrow\PR_K^{n,\rm rig}$$
the inclusion. By construction (see page \pageref{StrOpNeig}), the system $(W^m)_{m\in\N}$ is a cofinal system of strict open neighborhoods of $]\PR_k^n\setminus\PR_k^j[_P$ in $\PR_K^{n,\rm rig}.$ Thus, by definition,
$$\HH_{\rm rig}^{\ast}((\PR_k^n\setminus\PR_k^j)\slash K)=\Hyp^\ast(\PR_K^{n,\rm rig},\dlim_{m\in\N}{f_m}_*f_m^*\Omega_{\PR_K^{n,\rm rig}\slash K}^{\bullet}).$$
Taking the tube of radius $\lambda$ commutes with taking finite intersections of closed subspaces (cf.\ \cite[2.3.5]{LeStumRigCoh}). Therefore,
\begin{eqnarray*}W^m &=& \PR_K^{n,\rm rig}\setminus \bigcap_{l=j+1}^n]V_+(T_l)[_{\lambda_m}\\
&=& \bigcup_{l=j+1}^n\PR_K^{n,\rm rig}\setminus ]V_+(T_l)[_{\lambda_m}
\end{eqnarray*}
and thus each $f_m$ is a quasi-Stein morphism in the sense of \cite[p.\ 20]{LeStumRigCoh}, since $W^m$ is a finite union of affinoid varieties, cf.\ (the proof of) Lemma \ref{LostLemma}, ii). This particularly implies that the higher direct image ${\rm \bf R}^i {f_{m}}_*$ vanishes for $i>0$ so that the above hypercohomology can be computed as
\begin{eqnarray*}
 \Hyp^\ast(\PR_K^{n,\rm rig},\dlim_{m\in\N}{f_m}_*{f_m}^*\Omega_{\PR_K^{n,\rm rig}\slash K}^{\bullet}) &=& \dlim_{m\in\N}\Hyp^\ast(\PR_K^{n,\rm rig},{f_m}_*f_m^*\Omega_{\PR_K^{n,\rm rig}\slash K}^{\bullet})\\
 &=& \dlim_{m\in\N}\Hyp^\ast(W^m,f_m^*\Omega_{\PR_K^{n,\rm rig}\slash K}^{\bullet})\\
 &=& \dlim_{m\in\N}\Hyp^\ast(W^m,\Omega_{\PR_K^{n,\rm rig}\slash K}^{\bullet}).
\end{eqnarray*}
Therefore, it can be computed as the cohomology of the total complex associated with the double complex
$$\left(\dlim_{m\in\N}\bigoplus_{j+1\leq l_0<\ldots< l_r\leq n} \Gamma\left(\bigcap_{e=0}^{r}\PR_K^{n,\rm rig}\setminus ]V_+(T_{l_e})[_{\lambda_m},\Omega_{\PR_K^{n,\rm rig}\slash K}^{s}\right)\right)^{s=0,\ldots,n}_{r=0,\ldots,n-j}$$
which has, say, as $r$-th row the {\v C}ech complex for the sheaf $\Omega_{\PR_K^{n,\rm rig}}^{r},$ $r=0,\ldots,n,$ associated with the above covering of $W^{\lambda_m}.$ Taking cohomology along the rows of this double complex yields the $E_1$-page of a spectral sequence
$$E_1^{r,s}=\dlim_{m\in\N}\HH^s(\PR_K^{n,\rm rig}\setminus]\PR_k^j[_{\lambda_m},\Omega_{\PR_K^{n,\rm rig}\slash K}^{r})\Longrightarrow\HH_{\rm rig}^{\ast}((\PR_k^n\setminus\PR_k^j)\slash K).$$
Combining (\ref{TubeDense}) with the long exact local cohomology sequence and (\ref{Maja}), one now obtains the desired result (\ref{Lilly}) from computing the $E_2$-page of this spectral sequence: it can be seen from (\ref{Brucke}), again using the long exact cohomology sequence for rigid cohomology, that 
$$\HH_{\rm rig}^\ast((\PR_k^n\setminus\PR_k^j)\slash K)=\bigoplus_{l=0}^{n-j-1}K[-2l].$$ 
\end{proof}

It follows that the complex
\begin{eqnarray*}0 &\rightarrow& \dlim_{m\in\N}\Ind_{\Parbf_{(j+1,n-j)}(\VCal)(1)}^{\G(\VCal)}\left(\tilde\HH_{]\PR_k^{j}[_{\lambda_m}}^{n-j}(\PR_k^{n,\rm rig},\Omega_{\PR_K^{n,\rm rig}\slash K}^0)\otimes\mathcal{K}_{n-j} \right)\rightarrow\ldots\\
&\rightarrow& \dlim_{m\in\N}\Ind_{\Parbf_{(j+1,n-j)}(\VCal)(1)}^{ \G(\VCal)}\left(\tilde\HH_{]\PR_k^{j}[_{\lambda_m}}^{n-j}(\PR_k^{n,\rm rig},\Omega_{\PR_K^{n,\rm rig}\slash K}^n)\otimes\mathcal{K}_{n-j}\right)\rightarrow 0
\end{eqnarray*}
is acyclic for all $j=0,\ldots,n-1,$ hence the $E_1$-page of the spectral sequence (\ref{LastSpecSeq}) computes as
\begin{eqnarray*}
E_1^{r,s} &=& h^{s}(E_0^{r,\bullet})\\
&=& h^{s}(\dlim_{m\in\N}\FCal^{r+\bullet}(U^{m})^r\slash\FCal^{r+\bullet}(U^{m})^{r+1})\\
&=& h^{s}(v_{\Parbf_{(r+1,1^{n-r})}(\VCal)(1)}^{\G(\VCal)}(K)'\otimes_K\HH^{n-r}(\PR_k^{n,\rm rig},\Omega_{\PR_K^{n,\rm rig}\slash K}^{r+\bullet}))\\
&=& \begin{cases}
     v_{\Parbf_{(r+1,1^{n-r})}(\VCal)(1)}^{\G(\VCal)}(K)' &\text{\rm if } r\in\{0,\ldots,n\}, s=n-2r\\
     0 &\text{else.}
    \end{cases}
\end{eqnarray*}
From this shape it follows that the spectral sequence degenerates on its $E_1$-page, i.e.\ 
$$E_1^{r,s}=E_\infty^{r,s}=\gr^r(h^{r+s}(\dlim_{m\in\N}\Omega_{\PR_K^{n,\rm rig}\slash K}^\bullet(U^{m})))=\gr^r(\HH_{\rm rig}^{r+s}(\Drin\slash K)).$$
Furthermore, for each $I\subsetneq\Delta,$ the natural identification 
$$\G(\VCal)\slash\Parbf_I(\VCal)(1)\rightarrow G\slash P_I$$
of sets with $\G(\VCal)$-action yields an isomorphism 
$$v_{\Parbf_I(\VCal)(1)}^{\G(\VCal)}(K)\xrightarrow{\sim}v_{P_I}^{G}(K)$$
of $\G(\VCal)$-modules. Therefore, the following theorem is (re)proved:
\begin{thm}\label{RigDrinSecComp} The rigid cohomology of $\Drin$ is given by
 $$\HH_{\rm rig}^\ast(\Drin\slash K)=\bigoplus_{i=0}^nv_{P_{(1+n-i,1^i)}}^{G}(K)'[-i].$$
\end{thm}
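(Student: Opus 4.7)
The plan is to assemble the pieces already established in this section. The setup from Theorem \ref{DescMod}, after passing to the direct limit over $m\in\N$ (which is exact, hence preserves the short exact sequences describing filtration quotients), gives a $\G(\VCal)$-filtration $\dlim_m \ECal^i(U^m)^\bullet$ on each $\dlim_m \HH^0(U^m,\Omega^i_{\PR_K^{n,\rm rig}/K})$ whose graded pieces fit into short exact sequences involving the ``induction part'' $\Ind_{\Parbf_{(j+1,n-j)}(\VCal)(1)}^{\G(\VCal)}(\tilde\HH^{n-j}_{]\PR_k^j[_{\lambda_m}}(\cdots)\otimes\mathcal{K}_{n-j})$ and the ``Steinberg part'' $v_{\Parbf_{(j+1,1^{n-j})}(\VCal)(1)}^{\G(\VCal)}(K)'\otimes_K\HH^{n-j}(\PR_K^{n,\rm rig},\Omega^i_{\PR_K^{n,\rm rig}/K})$. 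By Lemma \ref{DERHAMWORKS} this latter direct limit computes $\HH^0(\PR_K^{n,\rm rig},\dlim_m (j_m)_\ast j_m^\ast \Omega^i_{\PR_K^{n,\rm rig}/K})$, and the filtration is compatible with the de Rham differential by the compatibility clause of Theorem \ref{DescMod}. Hence one obtains the spectral sequence (\ref{LastSpecSeq}) with $E_0^{r,s}$ equal to the $(r+s)$-th term of the $r$-th filtration quotient, converging to $\HH_{\rm rig}^{r+s}(\Drin/K)$.

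Next I would compute the $E_1$-page. For each fixed $j$ the ``induction part'' of the $j$-th quotient assembles into a complex in the de Rham index whose underlying tensor factor is exactly the complex of Lemma \ref{KEYLEMMA} (the $\mathcal{K}_{n-j}$ and induction functors are exact in this argument). Lemma \ref{KEYLEMMA} then says this subcomplex is acyclic, so in the long exact cohomology sequence of the filtration quotient, the $E_1$-terms are determined entirely by the ``Steinberg part''. For this part, rigid GAGA gives $\HH^{n-r}(\PR_K^{n,\rm rig},\Omega^{r+\bullet}_{\PR_K^{n,\rm rig}/K})\cong \HH^{n-r}(\PR_K^n,\Omega^{r+\bullet}_{\PR_K^n/K})$, which by Hodge cohomology of projective space is one-dimensional over $K$ concentrated in the single degree $\bullet=n-2r$. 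Therefore
$$E_1^{r,s}=\begin{cases} v_{\Parbf_{(r+1,1^{n-r})}(\VCal)(1)}^{\G(\VCal)}(K)' & \text{if } 0\le r\le n,\ s=n-2r,\\ 0 & \text{otherwise.}\end{cases}$$

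From this shape there are no possible nontrivial differentials on any further page (the support lies on the antidiagonal $r+s=n-r$ with distinct $r+s$-values for distinct $r$, so the spectral sequence degenerates at $E_1$). Thus $E_1=E_\infty$ describes the graded pieces of $\HH_{\rm rig}^\ast(\Drin/K)$, and since these graded pieces all carry distinct Tate twists, the induced filtration splits as in Proposition \ref{RigCohCDrinComp}. Finally, the canonical bijection $\G(\VCal)/\Parbf_I(\VCal)(1)\cong G/P_I$ induces the isomorphism $v_{\Parbf_I(\VCal)(1)}^{\G(\VCal)}(K)\xrightarrow{\sim} v_{P_I}^G(K)$, so passing to $G$-modules yields the stated formula.

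The main obstacle in this approach is the acyclicity statement of Lemma \ref{KEYLEMMA} -- that is the one step requiring genuine new input (comparison of rigid cohomology of $\PR_k^n\setminus\PR_k^j$ via a cofinal system of affinoid strict neighbourhoods with the formula from (\ref{Brucke})). Once that lemma is granted, everything else here is bookkeeping: identifying the induction part of the filtration graded as the complex of Lemma \ref{KEYLEMMA}, invoking rigid GAGA on the Steinberg part, and reading off degeneration from the sparsity of the $E_1$-page.
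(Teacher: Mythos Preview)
Your proposal is correct and follows essentially the same approach as the paper: assemble the filtered de Rham complex from Theorem \ref{DescMod}, pass to the direct limit, kill the induction part of each graded quotient via Lemma \ref{KEYLEMMA}, compute the Steinberg part using the Hodge cohomology of projective space (via rigid GAGA), and read off degeneration at $E_1$ from the sparsity of the support. One small remark: your appeal to distinct Tate twists to split the filtration is unnecessary, since your own observation that the nonzero $E_1^{r,s}$ have pairwise distinct total degrees $r+s=n-r$ already shows that each $\HH_{\rm rig}^d(\Drin/K)$ has a single nonzero graded piece, so there is nothing to split.
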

\noindent Adding Tate twists then yields the formula that would be obtained from Theorem \ref{RigDrinFirstComp} by using Poincar\'e duality.

\end{document}